    \newcommand{\CO}{{\mathcal {O}}}
    \newcommand{\CW}{{\mathcal {W}}}
    \newcommand{\lenth}{{\mathrm {\lenth}}}
    \theoremstyle{plain}
    \newtheorem{thm}{Theorem}[section] \newtheorem{corollary}[thm]{Corollary}
    \newtheorem{lemma}[thm]{Lemma}  \newtheorem{proposition}[thm]{Proposition}
     \newtheorem{definition}[thm]{Definition}
    \numberwithin{equation}{section}
\begin{document}
\author{Jingsong Chai}
\title{A weak kernel formula for Bessel functions}
\maketitle{}

\begin{abstract}
In this paper, we prove a weak kernel formula of Bessel functions
attached to irreducible generic representations of p-adic $GL(n)$.
As an application, we show that the Bessel function defined by
Bessel distribution coincides with the Bessel function defined via
uniqueness of Whittaker models on the open Bruhat cell.
\end{abstract}

\maketitle

\section{Introduction}

For a generic irreducible smooth representation $\pi$ of $GL(n,F)$
with its contragredient $\tilde{\pi}$, where $F$ is a p-adic field,
there are two ways to attach Bessel functions to $\pi$. The first is
via Bessel distribution $B_{l,l'}$, where $l,l'$ are Whittaker
functionals on $\pi$ and $\tilde{\pi}$ respectively. Such
distributions were used by Gelfand and Kazhdan
(\cite{GelfandKazhdan:1975}), and Shalika (\cite{Shalika:1974}) to
prove uniqueness of Whittaker functionals. In \cite{Baruch:2001},
for more general quasi-split groups, E.Baruch showed that the
restriction of $B_{l,l'}$ to the big open Bruhat cell is given by a
locally constant function $j_0(g)$. \\

On the other hand, for Whitaker function $W\in \mathcal{W}(\pi)$,
where $\mathcal{W}(\pi)$ denotes the Whittaker model of $\pi$, the
integral

$$\int_{N_n} W(gu)\psi^{-1}(u)du$$
converges in the stable sense if $g$ is in the big open cell, and
thus defines a function $j_{\pi}(g)$ there such that

$$j_\pi(g)W(I)=\int_{N_n} W(gu)\psi^{-1}(u)du$$
because of uniqueness of Whittaker functionals. This Bessel function
$j_{\pi}$ was first defined in this way by David Soudry in
\cite{Soudry:1984} for $GL(2,F)$, and then was generalized by
E.Baruch to $GL(n,F)$. For more details see
\cite{Baruch:2003,Baruch:2005}. \\

In the case $GL(3,F)$ (also $GL(2,F)$), E.Baruch in
\cite{Baruch:1997,Baruch:2004} proved the Bessel function
$j_{\pi}(g)$ is locally integrable on the whole group, and gives the
Bessel distribution $B_{l,l'}$ on $GL(3,F)$, which implies that the
above two functions $j_0,j_{\pi}$ are the same. \\

These Bessel functions and Bessel distributions have many
applications to the theory of automorphic forms, to list a few, for
example see \cite{BaruchMao:2003, BaruchMao:2005, BaruchMao:2007,
CKPSSH:2004, CPSSH:2008, Shahidi:2002}. Thus it is desirable to
generalize E.Baruch's important results to more general $GL(n,F)$.
The obstacle is the local integrability of $j_{\pi}(g)$. This is
done by E.Baruch in \cite{Baruch:2004} for $GL(3,F)$ case
using Shalika germs, but seems to be very difficult in general. \\

In this paper, we generalize some above results to $GL(n,F)$. More
precisely, we proved the
following result. \\

\let\thefootnote\relax\footnote{
\textit{Mathematics Subject Classification (2010).} Primary: 22E50; Secondary:11F70. \\

\textit{Keywords:} Bessel functions; Bessel distributions; weak
kernel formula. \\}


\textbf{Theorem 1}. If $\pi$ is irreducible, smooth and generic,
then we have

\[
j_0(g)=j_{\pi}(g)
\]
for all $g\in N_n\omega_nA_nN_n$, where $N_n$ is the upper
triangular unipotent subgroup, $A_n$ is the subgroup of diagonal
matrices, and $\omega_n$ is the longest element in the Weyl group.
\\

Once we know local integrability of Bessel functions, we can also
show $B_{l,l'}$ is given by $j_{\pi}$ on the whole group following
the method in \cite{Baruch:2004}. \\

We here essentially follow Baruch's approach, and an important
ingredient in the proof is a kernel formula, which has its own
interests and can be stated as
follows. \\

\textbf{Theorem 2}(Theorem \ref{thm1}, Theorem \ref{thm3}). Assume
either $\pi$ is supercuspidal and $W$ is any Whittaker function of
$\pi$, or $\pi$ is irreducible smooth and generic and $W$ is the
normalized Howe vectors with sufficiently large level(see section 5
for the definition of Howe vectors). For any $b\omega_n$,
$b=diag(b_1,...,b_n)\in A_n$, and any $W\in \mathcal{W}(\pi,\psi)$,
we have

$$W(b\omega_n)=$$
$$\int j_{\pi}\left(b\omega_n \begin{pmatrix} a_1 & \\ x_{21} & a_2 \\ & & \ddots \\
x_{n-1,1} & \cdots & x_{n-1,n-2} & a_{n-1} \\ & & & &  1
\end{pmatrix}^{-1} \right) W\begin{pmatrix} a_1 & \\ x_{21} & a_2 \\
& & \ddots \\ x_{n-1,1} & \cdots & x_{n-1,n-2} & a_{n-1}\\ & & & & 1
\end{pmatrix} $$
$$|a_1|^{-(n-1)}da_1|a_2|^{-(n-2)}dx_{21}da_2\cdots |a_{n-1}|^{-1}dx_{n-1,1}\cdots dx_{n-1,n-2}da_{n-1} $$
where the right side is an iterated integral, $a_i$ is integrated
over $F^{\times}\subset F$ for $i=1,...,n-1$, $x_{ij}$ is integrated
over $F$ for all relevant $i,j$, and all measures are additive
self-dual Haar measures on $F$. \\

We remark that in the case of general generic representation $\pi$,
this kernel formula is expected to be true for a wide class of
Whittaker functions of $\pi$(though not all of them), but currently
we are only able to prove it for Howe vectors which is sufficient
for the purpose of this paper. \\

Such formula was first proved by David Soudry in \cite{Soudry:1984}
for generic irreducible representations of $GL(2,F)$, and then was
generalized to $GL(3,F)$ by E.Baruch in \cite{Baruch:2004}. Due to
the lack of local integrability of $j_{\pi}$, we have to write the
above
integral as an iterated integral. \\

E.Baruch in \cite{Baruch:2001} showed the existence of $j_0$ for
smooth generic irreducible representations of quasi-split reductive
groups over local fields of characteristic zero. Recently E.Lapid
and Zhengyu Mao in \cite{LapidMao:2013} defined $j_{\pi}$ using
uniqueness of Whittaker functionals for split reductive groups. It
is interesting to see if the results here can be generalized to
these cases. \\

The paper is organized as follows. In section 2 and 3, we recall
some results about Bessel functions $j_{\pi}(g)$ and Bessel
distributions $B_{l,l'}$. Section 4 is to prove the weak kernel
formula. Section 5 is devoted to prove some properties of Howe
vectors, which will be needed later. In section 6 we show that these
two Bessel functions are equivalent in the supercuspidal case. In
the last section we generalize the results to generic case. \\

\textbf{Notations.}\\

Let $F$ be a p-adic field with ring of integers $\mathcal{O}$, use
$|\cdot|$ to denote the valuation on $F$. We will always fix a
self-dual Haar measure on $F$. Let $K_n=GL(n,\mathcal{O})$ and
$G_n=GL(n,F)$, $\pi$ a generic irreducible smooth admissible
representation of $G_n$, with its contragredient $\tilde{\pi}$. Let
$N_n$ be the maximal unipotent subgroup of upper triangular
matrices. Let $A_n$ be the group of diagonal matrices. Let
$\bar{N_n}$ be the transpose of $N_n$. $B_n=A_nN_n$,
$\bar{B_n}=A_n\bar{N_n}$. \\

Let $\psi$ be a nontrivial additive character of $F$ with conductor
exactly $\mathcal{O}$.  We extend $\psi$ to a character of $N_n$ by
$\psi(u)=\psi(\sum_{i=1}^{n-1} u_{i,i+1} )$ if $u=(u_{ij})\in N_n$,
and still denote it as $\psi$. \\

Use $l,l'$ to denote the Whittaker functionals on $\pi,\tilde{\pi}$
with respect to $\psi$ and $\psi^{-1}$, respectively. Let
$\mathcal{W}=\mathcal{W}(\pi,\psi)$,
$\mathcal{\widetilde{W}}=\mathcal{\widetilde{W}}(\tilde{\pi},\psi^{-1})$
be the corresponding Whittaker models.  \\

Let $\mathbb{W}$ be the Weyl group of $G_n$, and use $\omega_n$ to
denote the longest Weyl element in $\mathbb{W}$, i.e.
$\omega_n=\begin{pmatrix} 0 & & 1 \\ &  \iddots \\ 1 & & 0
\end{pmatrix}$.     \\

\section{Bessel functions}

In this section, we review some results about Bessel functions
$j_{\pi}(g)$ in \cite{Baruch:2005}. For our purpose, we will
restrict to supercuspidal representations, though most of the
notions and results can be generalized to smooth irreducible generic
representations. We refer to \cite{Baruch:2005} for more
details in general. \\

So let $(\pi,V)$ be an irreducible supercuspidal representation of
$G_n$. If $\psi$ is a nondegenerate character of $N_n$, use
$\mathcal{W}=\mathcal{W}(\pi,\psi)$ to denote the Whittaker model of
$\pi$ w.r.t. $\psi$. If $M>0$ is a positive constant, let

\[
A_n^M=\{a\in A_n: |\frac{a_i}{a_{i+1}}|<M \ \ for \ \ i=1,2,...,n-1,
if \ \ a=diag(a_1,...,a_n) \}
\]
Note that as $M\to \infty$, $A_n^M$ cover $A_n$. \\

We start with the following important result of Baruch. \\

\begin{proposition}
\label{prop1} For any $W\in \mathcal{W}(\pi,\psi)$, $M>0$, the
function on $A_n^M\times N_n$ defined by

\[
(a,u)\to W(a\omega_n u)
\]
is compactly supported in $N_n$ with support independent of $a\in
A_n^M$. That is, if $W(a\omega_nu)\ne 0$, with $a\in A_n^M$, $u\in
N_n$, then there exists a compact subset $U\subset N_n$, which is
independent of $a$, such that $u\in U$.
\end{proposition}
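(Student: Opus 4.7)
The plan is to combine two standard properties of Whittaker functions of a supercuspidal $\pi$: smoothness---there is a compact open subgroup $K'\subset K_n$ with $W(gk)=W(g)$ for all $k\in K'$---and the fact that $W|_{A_n}$ is compactly supported modulo the center of $G_n$. Equivalently, there exist constants $0<c_1<c_2$ depending only on $W$ such that $W(a)=0$ whenever some ratio $|a_i/a_{i+1}|$ lies outside $[c_1,c_2]$.

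It suffices to produce a constant $R=R(W,M)$, independent of $a\in A_n^M$, such that $W(a\omega_n u)=0$ whenever some entry $u_{ij}$ of $u$ satisfies $|u_{ij}|>R$. The key input is an Iwasawa-type factorization on a single root subgroup: for $u=I+xE_{ij}$ with $i<j$ and $|x|$ large enough, a direct matrix manipulation yields
\[
\omega_n u = n(x)\, t(x)\, k(x),
\]
with $n(x)\in N_n$, $t(x)\in A_n$ having diagonal entries that are monomials in $x$, and $k(x)\in K'$ as soon as $|x|$ exceeds an explicit threshold determined by the level of $K'$. The Whittaker transformation rule together with right-$K'$-invariance then give
\[
W(a\omega_n u)=\psi\bigl(a n(x) a^{-1}\bigr)\,W(a\, t(x)).
\]
Since $a\in A_n^M$ keeps $|a_i/a_{i+1}|$ bounded by $M$ while some diagonal ratio of $t(x)$ grows as a positive power of $|x|$, the compact-support property of $W|_{A_n}$ forces $W(a\, t(x))=0$ once $|x|$ exceeds a bound depending on $M$, $c_1$, $c_2$ and the level of $K'$, but not on $a$. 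The $GL(2)$ case illustrates this cleanly: $\omega_2(I+xE_{12})$ factors as an upper-unipotent element times $\diag(-1/x,x)$ times $I+(1/x)E_{21}$, the last factor lying in $K'$ for $|x|$ large.

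To pass from a one-parameter $u$ to an arbitrary $u\in N_n$, I would peel off root subgroups one at a time, starting from those farthest from the diagonal and working inward, so that each peeling step reduces to the single-root case; equivalently, one can iterate the mirabolic decomposition $P_k=GL(k-1)\ltimes F^{k-1}$ applied to the last row of $u$, which reduces the problem for $GL(n)$ to the analogous problem for $GL(n-1)$. The main obstacle will be uniformity in $a\in A_n^M$: composing several root-by-root factorizations can introduce new contributions to the diagonal component and Whittaker-twist the left $N_n$ factor, and one has to verify that at every stage the uniform bounds survive. Ordering the peeling by root height and tracking how conjugation by $a\in A_n^M$ rescales each subsequent root subgroup should make the induction go through.
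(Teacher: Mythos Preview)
The paper does not give its own argument here: its proof is the one-line citation ``This follows from Theorem 5.7 and Lemma 6.1 in \cite{Baruch:2005}.'' Those results set up the subspace $\CW^0\subset\CW$ (equal to $\CW$ in the supercuspidal case) and prove the uniform support bound structurally. So there is no detailed argument in the paper to compare against; what can be compared is your direct approach versus Baruch's cited machinery.

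Your strategy---smoothness on the right, compact support of $W$ on $A_n$ modulo center, and an explicit $NAK$-type factorization of $\omega_n u$ for large entries---is the standard hands-on route and is essentially what underlies Baruch's results. The $GL(2)$ factorization you wrote is correct, and the same computation goes through for any single root subgroup $I+xE_{ij}$: one checks that $\omega_n(I+xE_{ij})=n(x)t(x)k(x)$ with $t(x)$ having some ratio $|t_k/t_{k+1}|$ of size $|x|^{-1}$ or $|x|^{-2}$, so $W(a\,t(x))=0$ once $|x|$ exceeds a bound depending only on $M$ and $W$. So the one-parameter case is fine.

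Two points on the passage to general $u\in N_n$. First, a small slip: for $u\in N_n$ the last \emph{row} is $(0,\dots,0,1)$; the mirabolic induction you have in mind peels off the last \emph{column}, i.e.\ the unipotent radical of the $(n-1,1)$ parabolic. Second, and more substantively, the root-by-root peeling is not quite as clean as your sketch suggests: after writing $\omega_n u_\alpha=n(x)t(x)k(x)$, the remaining factor becomes $k(x)u'$ with $u'$ the rest of $u$, and you must conjugate $u'$ by $k(x)$ (a bounded perturbation) and then by $t(x)$ before you can iterate. The cleaner way---and this is closer to what Baruch actually does---is the mirabolic/parabolic induction you mention second: write $u=u'v$ with $v$ in the abelian radical of the $(n-1,1)$ parabolic and $u'\in N_{n-1}$, handle $v$ first by an explicit factorization of $\omega_n v$ (which reduces to a $GL(2)$-type computation in the bottom two rows), and then observe that what remains is $a'\omega_{n-1}u'$ for a new $a'$ lying in $A_{n-1}^{M'}$ with $M'$ depending only on $M$ and $W$. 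That is the step where the ``uniformity in $a$'' you flag is actually checked, and once stated this way the induction is straightforward. Your proposal has the right ideas but would benefit from replacing the root-by-root peeling by this parabolic induction, which is both what makes the uniformity transparent and what the cited reference does.
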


\begin{proof}
This follows from Theorem 5.7 and Lemma 6.1 in \cite{Baruch:2005}.
\end{proof}

The above result allows us to define Bessel functions for
supercuspidal representations as follows. Take $W\in \mathcal{W}$.
Consider the integrals for $g\in N_nA_n\omega_n N_n$

\[
\int_{Y_i}W(gu)\psi^{-1}(u)du
\]
where $Y_1\subset Y_2\subset ... \subset Y_i\subset Y_{i+1}\subset
...$ is an increasing filtration of $N_n$ with compact open
subgroups.  \\

By Proposition \ref{prop1}, if $Y_i$ is large enough, these
integrals become stable. The stable limit is independent of the
choices of sequence $\{Y_i \}$. Use $\int_{N_n}^*$ to denote this
limit, which defines a nontrivial Whittaker functional on
$\mathcal{W}$. Thus there exists a scalar $j_{\pi,\psi}(g)$ such
that

\[
\int_{N_n}^* W(gu)\psi^{-1}(u)du=j_{\pi,\psi}(g)W(I)
\] \\

\begin{definition}
\label{def2} The assignment $g\to j_\pi(g)=j_{\pi,\psi}$ defines a
function on $N_nA_n\omega_n N_n$, which is called the Bessel
function of $\pi$ attached to $\omega_n$.
\end{definition}

We extend $j_{\pi}$ to $G_n$ by putting $j_{\pi}(g)=0$ if $g\notin
N_nA_n\omega_nN_n$, and still use $j_{\pi}$ to denote it and call it
the Bessel function of $\pi$. \\

$\bullet$ $j_{\pi}$ is locally constant on $N_nA_n\omega_n N_n$. \\

$\bullet$ For any $u_1, u_2\in N_n$, any $g\in G_n$, we have
$j_{\pi}(u_1gu_2)=\psi(u_1)\psi(u_2)j_{\pi}(g)$. \\

$\bullet$ One may also attach Bessel functions to other Weyl
elements. For more details, see \cite{Baruch:2005}. For Bessel
functions defined
in this way for split reductive groups, see \cite{LapidMao:2013}. \\

For $W\in \mathcal{W}$, let $\widetilde{W}(g)=W(\omega_n\cdot
{^t}g^{-1})$, then $\{\widetilde{W}:W\in \mathcal{W} \}$ is the
Whittaker model for the contragredient $\widetilde{\pi}$ with
respect to $\psi^{-1}$. By \textbf{Corollary 8.5} in
\cite{Baruch:2005}, we have the following relation

\begin{equation}
j_{\tilde{\pi},\psi^{-1}}(g)=j_{\pi,\psi}(g^{-1}), \ \ \ \ \ \ g\in
B_n\omega_nB_n \label{1}
\end{equation} \\

\section{Bessel distributions}

In this section, we collect some useful properties about Bessel
distributions. Let $\pi$ be an irreducible smooth generic
representation of $G_n$. Let $\pi^{*}$ and $\widetilde{\pi}^{*}$
denote the linear dual of $\pi$ and $\widetilde{\pi}$ respectively.
Let $f$ be a locally constant function with compact support on
$G_n$, take $l\in \pi^{*}$, $l'\in \widetilde{\pi}^{*}$. Define
$\widetilde{\pi}(f)l'$ as

$$\widetilde{\pi}(f)l'=\int_{G_n}f(g)\widetilde{\pi}(g)l'dg$$
or equivalently, for any $\tilde{v}\in \widetilde{\pi}$,

$$<\widetilde{\pi}(f)l', \tilde{v}>=\int_{G_n}f(g)<\widetilde{\pi}(g)l',\tilde{v}>dg$$
$$=\int_{G_n} f(g)<l', \widetilde{\pi}(g^{-1})(\tilde{v})>dg$$
then $\widetilde{\pi}(f)l'$ is a smooth linear functional on
$\widetilde{\pi}$, hence can be identified with a vector $v_{f,
l'}\in \pi$. \\

\begin{definition}
\label{def2} Define Bessel distribution $B_{l,l'}(f)$ as

$$B_{l,l'}(f)=l(v_{f,l'})$$
\end{definition}

$\bullet$ The definition of $B_{l,l'}$ depends on $l,l'$ and Haar measure $dg$ on $G_n$. \\

$\bullet$ When both $l,l'$ are Whittaker functionals, such
$B_{l,l'}$ is the Bessel distribution first studied by Gelfand and
Kazhdan (\cite{GelfandKazhdan:1975})  for $GL_n$ in p-adic case, and
by Shalika (\cite{Shalika:1974}) in archimedean case, by Baruch
(\cite{Baruch:2001}) for quasi-split groups in both
non-archemedean and archemedean cases.  \\

$\bullet$ It was shown in \cite{Baruch:2001} that when both $l,l'$
are Whittaker functionals with respect to $\psi$ and $\psi^{-1}$
respectively, $B_{l,l'}$ can be represented by a locally constant
function in non-archimedean case, and by a real analytic function in
archimedean case, when restricted to the open Bruhat cell. We will
denote this
function by $j_0(g)$, $g\in N_nA_n\omega_n N_n$. \\

$\bullet$ When $l'$ is the Whittaker functional on $\widetilde{\pi}$
with respect to $\psi^{-1}$, for any $\hat{W}_{\widetilde{v}}\in
\CW(\widetilde{\pi},\psi^{-1})$, we have

$$<v_{f,l'}, \widetilde{v}>=\int_{G_n} f(g)\hat{W}_{\widetilde{v}}(g^{-1})dg $$
\\

From now on, we will always assume $l,l'$ are nonzero Whittaker
functionals. Fix $l$ on $\pi$ with respect to $\psi$, for $W\in
\mathcal{W}(\pi,\psi)$, $\hat{W}\in
\mathcal{W}(\tilde{\pi},\psi^{-1})$, if either $W\begin{pmatrix} h &
\\ & 1 \end{pmatrix}$ or $\hat{W} \begin{pmatrix} h & \\ & 1
\end{pmatrix}$ is compactly supported mod $N_{n-1}$, by results in \cite{Bernstein:1984}, we can
normalize $l'$, so that

$$<v, \tilde{v}> =\int_{N_{n-1}\backslash G_{n-1}} W_v
\begin{pmatrix} h & \\ & 1 \end{pmatrix}\hat{W}_{\tilde{v}}
\begin{pmatrix} h & \\ & 1 \end{pmatrix}dh$$
where the right side integral defines a $P_n$ invariant pairing
between $\pi$ and $\tilde{\pi}$, here $P_n$ is the so-called
mirabolic subgroup of $G_n$. \\

\begin{lemma}
\label{le1} With the above normalization, if $\hat{W}_{\tilde{v}}
\begin{pmatrix} h & \\ & 1 \end{pmatrix}$ is compactly supported mod
$N_{n-1}$, we have for any $f\in C_c^{\infty}(G)$,

$$\int_{G_n}
f(g)\hat{W}_{\tilde{v}}(g^{-1})dg=\int_{N_{n-1}\backslash G_{n-1}}
B_{l,l'}\left( L\begin{pmatrix} h & \\ & 1 \end{pmatrix}.f
\right)\hat{W}_{\tilde{v}} \begin{pmatrix} h & \\ & 1
\end{pmatrix}dh$$
where $L$ denotes the left action of $G_n$ on $f$.
\end{lemma}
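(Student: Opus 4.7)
My plan is to reduce the lemma to the last bullet point preceding it, using the $P_n$-invariant pairing between $\pi$ and $\widetilde{\pi}$ as the bridge. The key identity I would prove first is the left-translation formula
\[
B_{l,l'}\bigl(L(g_0).f\bigr) = W_{v_{f,l'}}(g_0), \qquad g_0 \in G_n.
\]
To verify it, a change of variables in the definition of $\widetilde{\pi}(L(g_0).f)l'$ shows that $\widetilde{\pi}(L(g_0).f)l' = \widetilde{\pi}(g_0)\widetilde{\pi}(f)l'$; translating this under the identification of smooth linear functionals on $\widetilde{\pi}$ with vectors in $\pi$, it becomes $v_{L(g_0).f,l'} = \pi(g_0)v_{f,l'}$. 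Applying $l$ to both sides and recalling $W_v(g_0) = l(\pi(g_0)v)$ yields the claim.

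Second, I would specialize $g_0 = \begin{pmatrix} h & \\ & 1 \end{pmatrix}$ in this identity and integrate both sides against $\hat{W}_{\widetilde{v}}\begin{pmatrix} h & \\ & 1\end{pmatrix}$ over $N_{n-1}\backslash G_{n-1}$. The resulting integral on the right is exactly the Bernstein pairing formula for $<v_{f,l'},\widetilde{v}>$, and hence equals $<v_{f,l'},\widetilde{v}>$. On the other hand, the bullet immediately before the statement of the lemma evaluates this same pairing as $\int_{G_n} f(g)\hat{W}_{\widetilde{v}}(g^{-1})dg$. Equating the two expressions produces the claimed identity.

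The only technical issue, rather than a genuine obstacle, is to justify that the integral over $N_{n-1}\backslash G_{n-1}$ converges so that the normalization formula for $l'$ actually applies. This is precisely the point of the compact-support hypothesis on $\hat{W}_{\widetilde{v}}\begin{pmatrix} h & \\ & 1\end{pmatrix}$ in the lemma: combined with the smoothness (hence local boundedness) of $W_{v_{f,l'}}\begin{pmatrix} h & \\ & 1\end{pmatrix}$, it reduces the integral to one over a compact subset of $N_{n-1}\backslash G_{n-1}$, and the manipulations above are then entirely rigorous.
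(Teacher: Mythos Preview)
Your proposal is correct and follows essentially the same route as the paper: both establish the key identity $B_{l,l'}(L(\underline{h}).f)=W_{v_{f,l'}}(\underline{h})$ by showing $v_{L(\underline{h}).f,\,l'}=\pi(\underline{h})v_{f,l'}$ via a change of variables, and then conclude by recognizing the right-hand side as the Bernstein pairing $\langle v_{f,l'},\widetilde{v}\rangle$, which equals the left-hand side by the bullet point preceding the lemma. Your added remark on convergence is a welcome clarification that the paper leaves implicit.
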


\begin{proof}
Let $v_{f,l'}$ be the vector in $\pi$ as in Definition \ref{def2},
so $B_{l,l'}(f)=l(v_{f,l'})$. For $\widetilde{v}\in
\widetilde{\pi}$, denote $\underline{h}=\begin{pmatrix} h & \\ & 1
\end{pmatrix}$,  we have

$$<v_{L(\underline{h})f, l'}, \widetilde{v}>=\int_{G_n}
f(\underline{h}^{-1}g)<l', \widetilde{\pi}(g^{-1})\widetilde{v}>dg$$

$$=\int_{G_n} f(g)<l', \widetilde{\pi}(g^{-1}) \widetilde{\pi}(\underline{h}^{-1})\widetilde{v}>dg$$

$$=<v_{f,l'},
\widetilde{\pi}(\underline{h}^{-1})\widetilde{v}>=<\pi(\underline{h})v_{f,l'},
\widetilde{v}>$$ Thus $B_{l,l'}\left(  L\begin{pmatrix} h & \\ & 1
\end{pmatrix}.f  \right)=l(v_{L(\underline{h})f,
l'})=l(\pi(\underline{h})v_{f,l'})=W_{v_{f,l'}}(\underline{h})$. \\

It follows that the right side in lemma is $<v_{f,l'},
\widetilde{v}>$ by normalization, which equals the left side.
\end{proof}

\section{Kernel formula}

In this section, we will prove a weak kernel formula for Bessel
functions attached to supercuspidal representations as in section 1,
which is the first main result of this paper. The method of the
proof follows that of Baruch in \cite{Baruch:2004} by generalizing
corresponding results there to $GL(n)$. \\

So through out of this section, $\pi$ will be an irreducible
supercuspidal representation of $G_n$. Let $Y_i$ be the unipotent
part of the parabolic subgroup of $G_n$ associated to the partition
{(n-i+1,1,...,1)}, $1\leq i\leq n$. Note that $Y_1=\{I_n\}$,
$Y_n=N_n$. By
Proposition \ref{prop1}, we have the following lemma. \\

\begin{lemma}
\label{le2} For any $b\omega_n$, $b=diag(b_1,...,b_n)\in A_n$,  then
as a function of $u_i\in F^{n-i}$, the function

$$\int_{Y_i} W\left( b\omega_ny_i \begin{pmatrix} I & u_i &  \\  & 1 & \\  &
& I_{i-1} \end{pmatrix}\right)\psi^{-1}(y_i)dy_i$$ is compactly
supported, where $\psi(y_i)$ is the restriction of Whittaker
character to $Y_i\subset N_n$.
\end{lemma}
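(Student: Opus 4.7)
The plan is to deduce the lemma directly from Proposition~\ref{prop1} by recognizing that the element $y_i \cdot z(u_i)$, where $z(u_i) = \begin{pmatrix} I & u_i & \\ & 1 & \\ & & I_{i-1}\end{pmatrix}$ denotes the matrix appearing in the statement, ranges over the larger unipotent radical $Y_{i+1}$ as both $y_i \in Y_i$ and $u_i \in F^{n-i}$ vary. Let $Z_i = \{z(u_i) : u_i \in F^{n-i}\}$, an abelian subgroup of $N_n$ isomorphic to $F^{n-i}$. A short block-matrix computation, writing $y_i = \begin{pmatrix} I_{n-i+1} & A \\ 0 & N'\end{pmatrix}$ with $A$ of size $(n-i+1)\times (i-1)$ and $N'$ upper triangular unipotent of size $i-1$, shows that $y_i z(u_i)$ has exactly the block shape of an arbitrary element of the unipotent radical attached to the partition $(n-i, 1,\ldots,1)$. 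Hence the product map $Y_i \times Z_i \to Y_{i+1}$, $(y_i, z_i) \mapsto y_i z_i$, is a bijection, in agreement with the dimension count $\dim Y_{i+1} = \dim Y_i + (n-i)$.

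Granted this factorization, I choose $M > 0$ with $b \in A_n^M$. By Proposition~\ref{prop1} there exists a compact $U_0 \subset N_n$ such that $W(b\omega_n u) = 0$ for $u \in N_n \setminus U_0$. Since $Y_{i+1}$ is closed in $N_n$, the intersection $U_0 \cap Y_{i+1}$ is compact in $Y_{i+1}$, and its image under the continuous second-factor projection $Y_{i+1} = Y_i \cdot Z_i \to Z_i \cong F^{n-i}$ is a compact set $K \subset F^{n-i}$. For any $u_i \notin K$ and any $y_i \in Y_i$, the element $y_i z(u_i)$ then lies outside $U_0$, so the integrand $W(b\omega_n y_i z(u_i))$ vanishes identically in $y_i$, and the integral itself is zero. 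Thus the function of $u_i$ in the lemma is supported in $K$, which is what is claimed.

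The argument is essentially a direct reduction to Proposition~\ref{prop1}, and I do not expect any substantive obstacle beyond carefully setting up the block decomposition $Y_{i+1} = Y_i \cdot Z_i$. The character $\psi^{-1}(y_i)$ plays no role in this estimate, since pointwise vanishing of the integrand is already enough. The value of this lemma for the subsequent development is that the same factorization, applied iteratively for $i = 1, \ldots, n-1$, produces the nested integral over the parameters $a_j$ and $x_{kl}$ in the weak kernel formula of Theorem~2, and the compact support obtained here is what licenses the stable/iterated interpretation of those integrals.
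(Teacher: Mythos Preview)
Your proposal is correct and follows essentially the same approach as the paper: both arguments invoke Proposition~\ref{prop1} to get compact support of $u\mapsto W(b\omega_n u)$ on $N_n$ and then restrict to the subset $\{y_i\,z(u_i):y_i\in Y_i,\ u_i\in F^{n-i}\}\subset N_n$. You simply spell out in more detail what the paper leaves implicit, namely the factorization $Y_{i+1}=Y_i\cdot Z_i$ and the continuous projection to the $u_i$-coordinate.
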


\begin{proof}
By Proposition \ref{prop1}, if $b\in A_n^M$ for some constant $M>0$,
then the function $W(b\omega_n u)$ is compactly supported as a
function $u\in N_n$, with support independent of $b\in A_n^M$. Then
its restriction to

\[
\{y_i\begin{pmatrix} I & u_i &  \\  & 1 & \\  & & I_{i-1}
\end{pmatrix}: y_i\in Y_i, u_i\in F^{n-i}\}
\]
is again compactly supported. Now the lemma follows immediately.
\end{proof}

\begin{thm}
\label{thm1} (weak kernel formula) For any $b\omega_n$,
$b=diag(b_1,...,b_n)\in A_n$, and any $W\in \mathcal{W}$, we have

$$W(b\omega_n)=$$
$$\int j_{\pi}\left(b\omega_n \begin{pmatrix} a_1 & \\ x_{21} & a_2 \\ & & \ddots \\
x_{n-1,1} & \cdots & x_{n-1,n-2} & a_{n-1} \\ & & & &  1
\end{pmatrix}^{-1} \right) W\begin{pmatrix} a_1 & \\ x_{21} & a_2 \\
& & \ddots \\ x_{n-1,1} & \cdots & x_{n-1,n-2} & a_{n-1}\\ & & & & 1
\end{pmatrix} $$
$$|a_1|^{-(n-1)}da_1|a_2|^{-(n-2)}dx_{21}da_2\cdots |a_{n-1}|^{-1}dx_{n-1,1}\cdots dx_{n-1,n-2}da_{n-1} $$
where the right side is an iterated integral, $a_i$ is integrated
over $F^{\times}\subset F$ for $i=1,...,n-1$, $x_{ij}$ is integrated
over $F$ for all relevant $i,j$, and all measures are additive
self-dual Haar measures on $F$.
\end{thm}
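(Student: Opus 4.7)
The strategy I would follow is an iterated Fourier and Mellin inversion argument, generalizing Baruch's approach for $GL(3)$ in \cite{Baruch:2004}. The backbone is to peel off the integration variables of $L$ one column at a time, invoking Lemma \ref{le2} at each stage to justify the relevant inversion.

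I would begin by substituting the defining identity $j_\pi(g) W(I) = \int^{*}_{N_n} W(gu)\psi^{-1}(u)\,du$ (taken for a fixed auxiliary Whittaker function with $W(I)\neq 0$) into the integrand $j_\pi(b\omega_n L^{-1})$ on the right-hand side. After appropriate changes of variables and use of the left/right $N_n$-covariance of Whittaker functions, the right-hand side transforms into an iterated integral of $W(b\omega_n \cdot)$ against oscillatory factors coming from the parameters $(a_i, x_{ij})$ of $L$. The goal is to show that these oscillatory integrals collapse, via iterated Fourier inversion, into the single evaluation $W(b\omega_n)$.

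The iteration proceeds along the decreasing chain $Y_i \subset N_n$: at step $i$, one integrates out the variables $x_{i,\cdot}$ and $a_i$ using Fourier inversion on the quotient $Y_{i+1}/Y_i \cong F^{n-i}$ combined with Mellin inversion on $F^{\times}$. Lemma \ref{le2} is exactly the input ensuring that the partial integrand at each stage is compactly supported in the next integration variable, so each inversion is rigorously valid. The weights $|a_i|^{-(n-i)}$ in the measure arise as the accumulated Jacobians as one telescopes through the Iwasawa-style coordinates on $\bar{B}_{n-1}$ (embedded as the top-left corner of $G_n$ with the $(n,n)$-entry equal to $1$).

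The main obstacle is the delicate bookkeeping of measures and convergence. Because the right-hand side is only an iterated integral and is not absolutely convergent in general, the order of integration must be scrupulously respected, and each intermediate swap of integrations has to be justified by applying Lemma \ref{le2} at the correct level. Once the iteration terminates, the innermost stable integral over $N_n$ collapses via the defining identity of $j_\pi$ to recover $W(b\omega_n)$, completing the identity.
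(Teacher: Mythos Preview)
Your proposal identifies the correct ingredients---iterated Fourier inversion along the filtration $Y_1\subset\cdots\subset Y_n=N_n$, Lemma~\ref{le2} to guarantee compact support at each stage, and the defining identity of $j_\pi$ at the innermost level---and this is indeed the paper's method. Two points deserve clarification, however.

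First, the paper runs the argument in the opposite direction from what you describe. Rather than substituting the definition of $j_\pi$ into the right-hand side and collapsing, it starts from $W(b\omega_n)$ (more precisely, from the partial integrals $\int_{Y_i} W(b\omega_n y_i)\psi^{-1}(y_i)\,dy_i$) and builds up the iterated integral one layer at a time. At each step one defines a function $M_i(u)$ on $F^{n-i}$, invokes Lemma~\ref{le2} to see it is compactly supported, applies Fourier inversion $M_i(0)=\int\widehat{M}_i$, and computes $\widehat{M}_i$ explicitly as the next-level partial integral. The defining identity of $j_\pi$ enters only at the deepest stage, where the Fourier transform $\widehat{M}_{n-1}(a_1)$ is identified with $j_\pi\bigl(b\omega_n\,\mathrm{diag}(a_1^{-1},I_{n-1})\bigr)\,W\bigl(\mathrm{diag}(a_1,I_{n-1})\bigr)\,|a_1|^{-(n-1)}$. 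This direction sidesteps the difficulty you flag: one never has to manipulate the full non-absolutely-convergent iterated integral, since at each step the object in hand is a single well-defined number.

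Second, there is no Mellin inversion anywhere: every inversion is additive Fourier inversion on $F^k$. The restriction of the $a_i$-integration to $F^\times$ arises only because $F^\times$ has full additive measure in $F$; the measure $da_i$ is additive throughout, and the weights $|a_i|^{-(n-i)}$ come from conjugation Jacobians, exactly as you say.
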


\begin{proof}
The proof is based an inductive argument. We begin with the proof of
the following identity, for any $b\omega_n$,

$$\int_{Y_{n-1}} W\left( b\omega_ny_{n-1} \right)\psi(-y_{n-1})dy_{n-1} =$$

\begin{equation}
\int_{F^{\times}} j_{\pi}\left( b\omega_n\begin{pmatrix} a_1^{-1} &  \\
& I_{n-1}
\end{pmatrix}\right) W\begin{pmatrix} a_1 &  \\  & I_{n-1}
\end{pmatrix}|a_1|^{-(n-1)}da_1 \label{2}
\end{equation}

where $da_1$ is the additive Haar measure on $F$.  Note that because
of Proposition \ref{prop1}, the left side integral is absolutely
convergent. Because $\pi$ is supercuspidal, $W$ is compactly
supported mod $N_nZ_n$, where $Z_n$ is the center of $G_n$, then $W\begin{pmatrix} a_1 &  \\
& I_{n-1}
\end{pmatrix}$ is compactly supported in $F^\times$ as a function of $a_1$. Since $j_{\pi}$
is locally constant on the big cell, $j_{\pi}\left(
b\omega_n\begin{pmatrix} a_1^{-1} &  \\  & I_{n-1}
\end{pmatrix}\right)$ is also locally constant as a function of
$a_1$, then the right side integral reduces to a finite sum, and
hence is also absolutely convergent.  \\

For this consider the following function $M_{n-1}(x): F\to
\mathbb{C}$ by

\[
M_{n-1}(x)=\int_{Y_{n-1}} W\left( b\omega_ny_{n-1}\begin{pmatrix} 1 & x & \\
& 1 &
\\ &  & I_{n-2} \end{pmatrix}\right) \psi(-y_{n-1})dy_{n-1}
\]

By Lemma \ref{le2}, this is a compactly supported function in $x$.
Thus its Fourier transform $\widehat{M}_{n-1}(y)$ is also compactly
supported, and we have Fourier inversion formula

\[
M_{n-1}(x)=\int_{F} \widehat{M}_{n-1}(y)\psi(yx)dy=\int_{F^{\times}}
\widehat{M}_{n-1}(y)\psi(yx)dy
\]
where the last equality follows from the facts that $dy$ is the
additive Haar measure, and $F^{\times}$ is of full measure in $F$.
\\

Put $x=0$, we get

\[
M_{n-1}(0)=\int_{F^{\times}} \widehat{M}_{n-1}(y)dy
\]

Now we compute the Fourier coefficient $\widehat{M}_{n-1}(y)$ when
$y=a_1\neq 0$.

\begin{eqnarray*}
&& \widehat{M}_{n-1}(a_1) \\
&=&\int_F\int_{Y_{n-1}} W\left(
b\omega_ny_{n-1}\begin{pmatrix} 1 & x & \\  & 1 & \\ &  & I_{n-2}
\end{pmatrix}\right)\psi(-y_{n-1})\psi(-a_1x)dy_{n-1}dx
\\
&=& \int_F\int_{Y_{n-1}} W\left( b\omega_ny_{n-1}\begin{pmatrix} 1 & a_1^{-1}x & \\
& 1 &
\\ &  & I_{n-2}
\end{pmatrix}\right)\psi(-y_{n-1})\psi(-x)|a_1|^{-1}dy_{n-1}dx   \\
&=& \int_F\int_{Y_{n-1}} W\left( b\omega_n \begin{pmatrix} a_1^{-1}
&  \\  & I_{n-1}
\end{pmatrix} y_{n-1}\begin{pmatrix} 1 & x & \\
& 1 &
\\ &  & I_{n-2}
\end{pmatrix}\begin{pmatrix} a_1
&  \\  & I_{n-1}
\end{pmatrix} \right)\psi(-y_{n-1})\psi(-x)|a_1|^{-(n-1)}dy_{n-1}dx
\end{eqnarray*}

Put $y_n=y_{n-1}\begin{pmatrix} 1 & x & \\  & 1 & \\ &  & I_{n-2}
\end{pmatrix}\in Y_n=N_n$, the above integral becomes

\begin{eqnarray*}
&=&\int_{Y_n} W\left( b\omega_n\begin{pmatrix} a_1^{-1} &  \\  &
I_{n-1}
\end{pmatrix}y_n\begin{pmatrix} a_1 &  \\  & I_{n-1}
\end{pmatrix}\right)\psi^{-1}(y_n)|a_1|^{-(n-1)}dy_n            \\
&=& j_{\pi}\left( b\omega_n\begin{pmatrix} a_1^{-1} &  \\  & I_{n-1}
\end{pmatrix}\right)W\begin{pmatrix} a_1 &  \\  & I_{n-1}
\end{pmatrix}|a_1|^{-(n-1)}               \\
\end{eqnarray*}
where the last equality follows form the identity

\[
\int_{Y_n} \left(\pi\begin{pmatrix} a_1 &  \\  & I_{n-1}
\end{pmatrix} .W\right)\left(b\omega_n\begin{pmatrix} a_1^{-1} &  \\  &
I_{n-1}
\end{pmatrix}y_n \right)\psi^{-1}(y_n)dy_n
=
\]
\[
j_{\pi}\left( b\omega_n\begin{pmatrix} a_1^{-1} &  \\  & I_{n-1}
\end{pmatrix}\right)\left(\pi\begin{pmatrix} a_1 &  \\  & I_{n-1}
\end{pmatrix} .W\right)\left( I\right)
\]
which is the definition of $j_{\pi}$. \\

Now we get

\begin{eqnarray*}
& &\int_{Y_{n-1}} W\left( b\omega_ny_{n-1}
\right)\psi(-y_{n-1})dy_{n-1} \\
&=&M_{n-1}(0)=\int_{F^\times}  \widehat{M}_{n-1}(a_1)da_1 \\
&=&\int_{F^\times} j_{\pi}\left( b\omega_n\begin{pmatrix} a_1^{-1} &  \\
& I_{n-1} \end{pmatrix}\right) W\begin{pmatrix} a_1 &  \\  & I_{n-1}
\end{pmatrix}|a_1|^{-(n-1)}da_1   \\
\end{eqnarray*}
which is exactly what we  want to show in (\ref{2}).     \\

Now set $h_2=\begin{pmatrix} 1 & \\ x_{21} & a_2 \\ & & I_{n-2}
\end{pmatrix}$ with $x_{21}\in F$, $a_2\in F^\times$, we also use
$h_2$ to denote the left upper corner $2\times 2$ matrix. We first
note the following identity of product of matrices

\[
b\omega_n h_2^{-1}= \begin{pmatrix} I_{n-2} & & \\ & 1 &
-b_{n-1}b_n^{-1}a_2^{-1}x_{21} \\ & & 1
\end{pmatrix}\begin{pmatrix} I_{n-2} & & \\ & a_2^{-1} & \\ & & 1
\end{pmatrix}b\omega_n
\]
Hence

\begin{eqnarray*}
W_v\left( b\omega_nh_2^{-1} y_{n-1}h_2 \right)&=&
W_v\left(\begin{pmatrix} I_{n-2} & & \\ & 1 &
-b_{n-1}b_n^{-1}a_2^{-1}x_{21} \\ & & 1
\end{pmatrix}\begin{pmatrix} I_{n-2} & & \\ & a_2^{-1} & \\ & & 1
\end{pmatrix}b\omega_ny_{n-1} h_2 \right) \\
&=&\psi(-b_{n-1}b_n^{-1}a_2^{-1}x_{21})
W_{\pi(h_2)v}\left(\begin{pmatrix} I_{n-2} & & \\ & a_2^{-1} & \\ &
& 1
\end{pmatrix}b\omega_ny_{n-1} \right)
\end{eqnarray*}

and

\begin{eqnarray*}
&& j_{\pi}\left( b\omega_nh_{2}^{-1} \begin{pmatrix} a_1^{-1} &  \\
& I_{n-1}
\end{pmatrix}\right)\\
&=& j_{\pi}\left(\begin{pmatrix} I_{n-2} & & \\ & 1 &
-b_{n-1}b_n^{-1}a_2^{-1}x_{21} \\ & & 1
\end{pmatrix}\begin{pmatrix} I_{n-2} & & \\ & a_2^{-1} & \\ & & 1
\end{pmatrix}b\omega_n \begin{pmatrix} a_1^{-1} &  \\  &
I_{n-1}
\end{pmatrix}  \right) \\
&=&\psi(-b_{n-1}b_n^{-1}a_2^{-1}x_{21})j_{\pi}\left(\begin{pmatrix}
I_{n-2} & & \\ & a_2^{-1} & \\ & & 1
\end{pmatrix}b\omega_n \begin{pmatrix} a_1^{-1} &  \\  &
I_{n-1}
\end{pmatrix} \right)
\end{eqnarray*}

Now apply (\ref{2}) to $b'=diag(b_1,...,b_{n-2}, a_2^{-1}b_{n-1},
b_n)$, and $W=W_{\pi(h_2)v}$, we will get

\[
\int_{Y_{n-1}} W_{\pi(h_2)v}\left( b'\omega_ny_{n-1}
\right)\psi(-y_{n-1})dy_{n-1} =
\]
\[
\int_{F^{\times}} j_{\pi}\left( b'\omega_n\begin{pmatrix} a_1^{-1} &  \\
& I_{n-1}
\end{pmatrix}\right) W_{\pi(h_2)v}\begin{pmatrix} a_1 &  \\  & I_{n-1}
\end{pmatrix}|a_1|^{-(n-1)}da_1
\]
Multiply by $\psi(-b_{n-1}b_n^{-1}a_2^{-1}x_{21})$ on both sides,
and then

\[
\int_{Y_{n-1}} W\left( b\omega_nh_2^{-1} y_{n-1}h_2
\right)\psi(-y_{n-1})dy_{n-1} =
\]

\begin{equation}
\int_{F^{\times}} j_{\pi}\left( b\omega_nh_{2}^{-1} \begin{pmatrix} a_1^{-1} &  \\
& I_{n-1}  \end{pmatrix}\right) W\left(\begin{pmatrix} a_1 &  \\  &
I_{n-1}  \end{pmatrix}h_2 \right) |a_1|^{-(n-1)}da_1
 \label{3}
 \end{equation}

Write $y_{n-1}=y_{n-2}\begin{pmatrix} I_2 & u_2 & \\ & 1 & \\ & &
I_{n-3} \end{pmatrix}$
 with $u_2$ a column vector in $F^2$, then the left side of (\ref{3}) is

\begin{equation}
\int_{Y_{n-1}} W\left( b\omega_n y_{n-2}\begin{pmatrix} I_2 &
h_2^{-1}.u_2 &
\\ & 1 & \\ & & I_{n-3} \end{pmatrix}
   \right)\psi(-y_{n-2})\psi\begin{pmatrix} I_2 & -u_2 & \\ & 1 & \\ & & I_{n-3} \end{pmatrix}
    |a_2|^{(n-3)} dy_{n-1} \label{4}
\end{equation}

Now put

\[
M_{n-2}(u_2)=\int_{Y_{n-2}} W\left( b\omega_n y_{n-2}\begin{pmatrix}
I_2 &  u_2 & \\ & 1 & \\ & & I_{n-3}
\end{pmatrix}
   \right)\psi(-y_{n-2}) dy_{n-2}
\]

By Lemma \ref{le2}, $M_{n-2}(u_2)$ is compactly supported, and its
Fourier inversion formula is

\[
M_{n-2}(u_2)=\int_{F^2} \widehat{M}_{n-2}(v_2)\psi(v_2
u_2)dv_2=\int_{F\times F^{\times}} \widehat{M}_{n-2}(x,y)\psi((x,y)
u_2)dxdy
\]
if we write $v_2=(x,y)$ as a row vector, with $x\in F$, $y\in
F^{\times}$, and the last equality follows from the facts that
$dxdy$ are additive Haar
measures and $F\times F^{\times}$ is of full measure in $F^2$. \\

Put $u_2=0$, we get

\[
M_{n-2}(0)=\int_{F\times F^{\times}} \widehat{M}_{n-2}(x,y)dxdy
\hspace{3cm} (*)
\]

Now we compute the Fourier coefficient $\widehat{M}_{n-2}(x,y)$ with
$x=x_{21}\in F$, $y=a_2\neq 0$, we have

\begin{eqnarray*}
 && \widehat{M}_{n-2}(x_{21},a_2) \\
 &=& \int_{F^2}\int_{Y_{n-2}}
W\left( b\omega_n y_{n-2}\begin{pmatrix} I_2 &  u_2 & \\ & 1 &
\\ & & I_{n-3}
\end{pmatrix}
   \right)\psi(-y_{n-2})\psi(-(x_{21},a_2) u_2)dy_{n-2}du_2   \\
&=& \int_{F^2}\int_{Y_{n-2}} W\left( b\omega_n
y_{n-2}\begin{pmatrix} I_2 &  u_2 & \\ & 1 &
\\ & & I_{n-3}
\end{pmatrix}
   \right)\psi(-y_{n-2})\psi\begin{pmatrix} I_2 & -h_2.u_2 & \\ & 1 & \\ & &
I_{n-3} \end{pmatrix}dy_{n-2}du_2 \\
&=& \int W\left( b\omega_n y_{n-2}\begin{pmatrix} I_2 & h_2^{-1}.u_2
&
\\ & 1 & \\ & & I_{n-3} \end{pmatrix}
   \right)\psi(-y_{n-2})\psi\begin{pmatrix} I_2 & -u_2 & \\ & 1 & \\ & & I_{n-3} \end{pmatrix}
   |a_2|^{-1} dy_{n-2}du_2
\end{eqnarray*}

Thus by (\ref{4}),

\[
|a_2|^{-(n-2)}\int_{Y_{n-1}} W\left( b\omega_nh_2^{-1} y_{n-1}h_2
\right)\psi(-y_{n-1})dy_{n-1} =\widehat{M}_{n-2}(x_{21},a_2)
\]

and then by (\ref{3}),

\[
|a_2|^{-(n-2)}\int_{F^{\times}} j_{\pi}\left( b\omega_nh_{2}^{-1} \begin{pmatrix} a_1^{-1} &  \\
& I_{n-1}  \end{pmatrix}\right) W\left(\begin{pmatrix} a_1 &  \\  &
I_{n-1}  \end{pmatrix}h_2 \right) |a_1|^{-(n-1)}da_1
=\widehat{M}_{n-2}(x_{21},a_2)
\]

Plug it into the Fourier inversion formula $(*)$, we find

\[
\int_{Y_{n-2}} W\left( b\omega_n y_{n-2} \right)\psi(-y_{n-2})
dy_{n-2}= M_{n-2}(0)=\int_{F\times F^{\times}}\int_{F^{\times}}
\]

\begin{equation}
  j_{\pi}\left(b\omega_n\begin{pmatrix} a_1 & \\
x_{21} & a_2 & \\ & & I_{n-2}
\end{pmatrix}^{-1} \right) W\left(b\omega_n\begin{pmatrix} a_1 & \\ x_{21} & a_2 & \\ & & I_{n-2}
\end{pmatrix}\right)|a_1|^{-(n-1)}da_1
|a_2|^{-(n-2)}dx_{21}da_2 \label{5}
\end{equation}
where we write

\[
\begin{pmatrix} a_1 &  \\  &
I_{n-1}  \end{pmatrix}h_2=\begin{pmatrix} a_1 & \\ x_{21} & a_2 & \\
& & I_{n-2}
\end{pmatrix}.
\]

Inductively, we will have

$$\int_{Y_2} W(b\omega_ny_2)\psi(-y_2)dy_2=$$
$$\int j_{\pi}\left(b\omega_n \begin{pmatrix} a_1 & \\ x_{21} & a_2 \\ & & \ddots
\\ x_{n-2,1} & \cdots & x_{n-2,n-3} & a_{n-2} \\ & & & &  I_2  \end{pmatrix}^{-1}
\right)W\begin{pmatrix} a_1 & \\ x_{21} & a_2 \\ & & \ddots \\
x_{n-2,1} & \cdots & x_{n-2,n-3} & a_{n-2}
 \\ & & & &  I_2  \end{pmatrix} $$
\begin{equation}
|a_1|^{-(n-1)}da_1|a_2|^{-(n-2)}dx_{21}da_2\cdots
|a_{n-2}|^{-2}dx_{n-2,1}\cdots dx_{n-2,n-3}da_{n-2} \label{6}
\end{equation}
where the right side is an iterated integral, $a_i$ is integrated
over $F^{\times}\subset F$ for $i=1,...,n-2$, $x_{ij}$ is integrated
over $F$ for all relevant $i,j$ here, and all measures are additive
self-dual Haar measures on $F$. \\

To prove the weak kernel formula, set
$$h_{n-1}=\begin{pmatrix} & I_{n-2} \\ x_{n-1,1} & \cdots & x_{n-1,n-2} & a_{n-1} \\ & & & & 1 \end{pmatrix}$$
where $x_{n-1,i}\in F$, $i=1,2,...,n-2$, $a_{n-1}\in F^\times$. We
also use $h_{n-1}$ to denote the left upper corner matrix of size
$(n-1)\times (n-1)$. Note that we have identity

\[
b\omega_n h_{n-1}^{-1}=\begin{pmatrix} 1 & \\ & 1 &
a_{n-1}^{-1}b_3^{-1}b_2x_{n-1,n-2} & ... &
a_{n-1}^{-1}b_n^{-1}b_2x_{n-1,1} \\ & & I_{n-2}
\end{pmatrix}\begin{pmatrix} 1 & \\ & a_{n-1}^{-1} & \\ & & I_{n-2}
\end{pmatrix}b\omega_n
\]
Then we carry out the same argument as we derived (\ref{3}) from
(\ref{2}), then by (\ref{6}), we will have

$$\int_{Y_2} W(b\omega_n h_{n-1}^{-1}y_2h_{n-1})\psi(-y_2)dy_2=\int $$
$$ j_{\pi}\left(b\omega_nh_{n-1}^{-1} \begin{pmatrix} a_1 & \\ x_{21} & a_2
\\ & & \ddots \\ x_{n-2,1} & \cdots & x_{n-2,n-3} & a_{n-2} \\ & & & &  I_2
\end{pmatrix}^{-1} \right)W\left( \begin{pmatrix} a_1 & \\ x_{21} & a_2
\\ & & \ddots \\ x_{n-2,1} & \cdots & x_{n-2,n-3} & a_{n-2} \\ & & & &  I_2
\end{pmatrix}h_{n-1}\right) $$
\begin{equation}
|a_1|^{-(n-1)}da_1|a_2|^{-(n-2)}dx_{21}da_2\cdots
|a_{n-2}|^{-2}dx_{n-2,1}\cdots dx_{n-2,n-3}da_{n-2} \label{7}
\end{equation}

The left side integral is

\begin{equation}
\int_{F^{n-1}} W \left( b\omega_n\begin{pmatrix} I_{n-1} &
h_{n-1}^{-1}.u_{n-1} \\ & 1
\end{pmatrix} \right)\psi\begin{pmatrix} I_{n-1} & -u_{n-1} \\ & 1
\end{pmatrix}du_{n-1} \hspace{1cm}
\label{8}
\end{equation}

Now let
$$M_1(u_{n-1})=W\left(b\omega_n \begin{pmatrix} I_{n-1} & u_{n-1} \\ & 1 \end{pmatrix}\right)$$
which is a compactly supported function in column vector $u_{n-1}$.
Its Fourier inversion formula is

\begin{eqnarray*}
M_1(u_{n-1}) &=&\int_{F^{n-1}}
\widehat{M}_1(v_{n-1})\psi(-v_{n-1}u_{n-1})dv_{n-1} \\
&=&\int_{F^{n-2}\times F^{\times}}
\widehat{M}_1(z_1,...,z_{n-1})\psi(-(z_1,...,z_{n-1})u_{n-1})dz_1...dz_{n-1}
\end{eqnarray*}
where we write $v_{n-1}=(z_1,...,z_{n-1})$, with $z_1,...,z_{n-2}\in
F$, $z_{n-1}\in F^{\times}$, and the last equality follows from the
facts that $dz_1...dz_{n-1}$ is the additive Haar measure and
$F^{n-2}\times F^{\times}$ is of full measure in $F^{n-1}$. \\

Put $u_{n-1}=0$, we get

\[
M_1(0)=\int_{F^{n-2}\times F^{\times}}
\widehat{M}_1(z_1,...,z_{n-1})dz_1...dz_{n-1} \hspace{2.5cm}  (**)
\]

We compute the Fourier coefficient $\widehat{M}_1(z_1,...,z_{n-1})$,
with $z_1=x_{n-1,1},...,z_{n-2}=x_{n-1,n-2}\in F, z_{n-1}=a_{n-1}\in
F^{\times}$,

\begin{eqnarray*}
& &\widehat{M}_1(x_{n-1,1},...,x_{n-1,n-2}, a_{n-1}) \\
&=&\int_{F^{n-1}} W\left(b\omega_n \begin{pmatrix} I_{n-1} & u_{n-1}
\\ & 1
\end{pmatrix}\right)\psi(-(x_{n-1,1},...,x_{n-1,n-2}, a_{n-1})u_{n-1})du_{n-1}   \\
&=& \int_{F^{n-1}} W\left(b\omega_n \begin{pmatrix} I_{n-1} &
u_{n-1}
\\ & 1
\end{pmatrix}\right) \psi\left(-\begin{pmatrix} I_{n-1} & h_{n-1}.u_{n-1} \\ & 1
\end{pmatrix}\right)du_{n-1} \\
&=& \int_{F^{n-1}} W\left(b\omega_n \begin{pmatrix} I_{n-1} &
h_{n-1}^{-1}.u_{n-1} \\ & 1
\end{pmatrix}\right)\psi\left(-\begin{pmatrix} I_{n-1} & u_{n-1} \\ & 1
\end{pmatrix}\right)|a_{n-1}^{-1}|du_{n-1}
\end{eqnarray*}

Thus by (\ref{8}), we have

\[
|a_{n-1}|^{-1}\int_{Y_2} W(b\omega_n
h_{n-1}^{-1}y_2h_{n-1})\psi(-y_2)dy_2=\widehat{M}_1(x_{n-1,1},...,x_{n-1,n-2},
a_{n-1})
\]

Thus by (\ref{7}),

\[
\widehat{M}_1(x_{n-1,1},...,x_{n-1,n-2},a_{n-1})= |a_{n-1}|^{-1}\int
\]
\[  j_{\pi}\left(b\omega_nh_{n-1}^{-1}
\begin{pmatrix} a_1 & \\ x_{21} & a_2
\\ & & \ddots \\ x_{n-2,1} & \cdots & x_{n-2,n-3} & a_{n-2} \\ & & & &  I_2
\end{pmatrix}^{-1} \right)
W\left( \begin{pmatrix} a_1 & \\ x_{21} & a_2
\\ & & \ddots \\ x_{n-2,1} & \cdots & x_{n-2,n-3} & a_{n-2} \\ & & & &  I_2
\end{pmatrix}h_{n-1}\right)
\]
\[
|a_1|^{-(n-1)}da_1|a_2|^{-(n-2)}dx_{21}da_2\cdots
|a_{n-2}|^{-2}dx_{n-2,1}\cdots dx_{n-2,n-3}da_{n-2}
\] \\

Plug it into $(**)$, we finally find

$$W(b\omega_n)=$$
$$\int j_{\pi}\left(b\omega_n \begin{pmatrix} a_1 & \\ x_{21} & a_2 \\ & & \ddots
\\ x_{n-1,1} & \cdots & x_{n-1,n-2} & a_{n-1} \\ & & & &  1  \end{pmatrix}^{-1}
\right)W\begin{pmatrix} a_1 & \\ x_{21} & a_2 \\ & & \ddots \\
x_{n-1,1} & \cdots & x_{n-1,n-2} & a_{n-1} \\ & & & &  1
\end{pmatrix} $$
$$|a_1|^{-(n-1)}da_1|a_2|^{-(n-2)}dx_{21}da_2\cdots |a_{n-1}|^{-1}dx_{n-1,1}
\cdots dx_{n-1,n-2}da_{n-1} $$ where the right side is an iterated
integral, $a_i$ is integrated over $F^{\times}\subset F$ for
$i=1,...,n-1$, $x_{ij}$ is integrated over $F$ for all relevant
$i,j$, and all measures are additive Haar measure on $F$. This
finishes the proof.

\end{proof}

$\bullet$ Since we don't have absolute convergence of right side
integral, we have to write it as an iterated integral. If we know
the local integrability of $j_{\pi}$, then using the same argument
as in Lemma 5.3, \cite{Baruch:2004}, one can show that the right
side integral is then absolutely convergent, and it equals

\[
\int_{N_{n-1}\backslash G_{n-1}} j_{\pi}\left(y\begin{pmatrix}
h^{-1} & \\ & 1\end{pmatrix} \right)W\begin{pmatrix} h & \\ &
1\end{pmatrix}dh
\]

$\bullet$ The space of functions $\left\{ W\begin{pmatrix} g & \\ &
1 \end{pmatrix}
 : W\in \mathcal{W} \right\}$ is the Kirillov model of $\pi$.
 Theorem \ref{thm1}
gives the action of the longest Weyl element $\omega_n$ on this
model in terms of Bessel functions. It thus follows that if we want
to show two supercuspidal representations are equivalent, it
suffices
to show they have the same Bessel functions. \\

$\bullet$ In order to generalize the above argument to generic
smooth irreducible representations of $GL(n,F)$, we need to know the
space $\mathcal{W}^0$, as defined in section 5 of
\cite{Baruch:2005}, is invariant under right translations by
elements like

\[
\begin{pmatrix} a_1 & \\ x_{21} & a_2 \\
& & \ddots \\ x_{n-1,1} & \cdots & x_{n-1,n-2} & a_{n-1}\\ & & & & 1
\end{pmatrix}.
\]
But this is not clear, and we plan to address this issue in
future. \\

\begin{corollary}
\label{cor1} Let $\widetilde{W}(g)=W(\omega_n\cdot {^t}g^{-1}) \in
\mathcal{W}(\widetilde{\pi}, \psi^{-1})$, then for any $b\omega_n$,
$b\in A_n$, we have

$$\widetilde{W}((b\omega_n)^{-1})=$$
$$\int j_{\pi}\left( \begin{pmatrix} a_1 & \\ x_{21} & a_2 \\ & & \ddots \\ x_{n-1,1}
& \cdots & x_{n-1,n-2} & a_{n-1} \\ & & & &  1  \end{pmatrix}
b\omega_n \right)\widetilde{W}
\begin{pmatrix} a_1 & \\ x_{21} & a_2 \\ & & \ddots \\ x_{n-1,1}
& \cdots & x_{n-1,n-2} & a_{n-1} \\ & & & &  1  \end{pmatrix} $$
$$|a_1|^{-(n-1)}da_1|a_2|^{-(n-2)}dx_{21}da_2\cdots |a_{n-1}|^{-1}dx_{n-1,1}
\cdots dx_{n-1,n-2}da_{n-1} $$ where the right side is an iterated
integral, $a_i$ is integrated over $F^{\times}\subset F$ for
$i=1,...,n-1$, $x_{ij}$ is integrated over $F$ for all relevant
$i,j$, and all measures are additive self-dual Haar measures on $F$.
\end{corollary}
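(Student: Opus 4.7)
The plan is to deduce Corollary \ref{cor1} directly from Theorem \ref{thm1} by applying the kernel formula to the contragredient representation $\widetilde{\pi}$ (which is again supercuspidal) with character $\psi^{-1}$, and then translate the resulting statement back into one about $\pi$ using the inversion identity (\ref{1}).

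The key algebraic observation is that for $b = \mathrm{diag}(b_1,\dots,b_n) \in A_n$, the element $b' := \omega_n b^{-1}\omega_n = \mathrm{diag}(b_n^{-1},\dots,b_1^{-1})$ is diagonal and satisfies
\[
b'\omega_n = \omega_n b^{-1} = (b\omega_n)^{-1}.
\]
Moreover, for any $h$ in the lower triangular subgroup parametrized as in the theorem,
\[
(b'\omega_n\, h^{-1})^{-1} = h\,(b'\omega_n)^{-1} = h\,(b\omega_n).
\]
These two identities are what convert the formula of Theorem \ref{thm1} applied to $\widetilde{W}(b'\omega_n)$ into the formula claimed in the corollary.

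Concretely, I would first note that $\widetilde{W} \in \mathcal{W}(\widetilde{\pi},\psi^{-1})$ and apply Theorem \ref{thm1} verbatim to the data $(\widetilde{\pi},\widetilde{W},\psi^{-1})$ at the point $b'\omega_n$; this yields
\[
\widetilde{W}(b'\omega_n) = \int j_{\widetilde{\pi},\psi^{-1}}\!\left(b'\omega_n\, h^{-1}\right)\widetilde{W}(h)\,d\mu(h),
\]
where $d\mu(h)$ denotes the iterated measure
$|a_1|^{-(n-1)}da_1\cdots|a_{n-1}|^{-1}dx_{n-1,1}\cdots dx_{n-1,n-2}da_{n-1}$.
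Since the conductor of $\psi^{-1}$ equals that of $\psi$, the self-dual Haar measure is unchanged, so the measure structure carries over without modification. Then invoke the inversion relation (\ref{1}): whenever $b'\omega_n h^{-1} \in B_n\omega_n B_n$,
\[
j_{\widetilde{\pi},\psi^{-1}}(b'\omega_n h^{-1}) = j_{\pi,\psi}\bigl((b'\omega_n h^{-1})^{-1}\bigr) = j_{\pi}(h\cdot b\omega_n),
\]
and outside this set both sides vanish by the definition of $j_\pi$ extended by zero off the big cell. Combined with $\widetilde{W}(b'\omega_n) = \widetilde{W}((b\omega_n)^{-1})$, this gives the claimed formula.

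I do not expect a serious obstacle here: the corollary is essentially a symmetry statement, and all the analytic content (compact support properties, stability of the defining integrals, iterated rather than absolute convergence) is already contained in Theorem \ref{thm1}. The only point that requires modest care is verifying the two matrix identities $(b\omega_n)^{-1}=b'\omega_n$ and $(b'\omega_n h^{-1})^{-1} = h\cdot b\omega_n$, and checking that the support conditions ensuring the identity (\ref{1}) applies hold on the relevant locus (which they do, since $j_\pi$ is supported on $N_n A_n\omega_n N_n$ and the complement contributes zero to the iterated integral).
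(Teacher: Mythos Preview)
Your proposal is correct and follows essentially the same approach as the paper: the paper's proof is the single line ``Since $j_{\tilde{\pi},\psi^{-1}}(g)=j_{\pi,\psi}(g^{-1})$ for $g\in B_n\omega_nB_n$, apply the above theorem,'' which is precisely what you do by applying Theorem~\ref{thm1} to $(\widetilde{\pi},\widetilde{W},\psi^{-1})$ at $b'\omega_n=(b\omega_n)^{-1}$ and invoking~(\ref{1}). Your explicit verification of the matrix identities and measure compatibility simply fills in details the paper leaves implicit.
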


\begin{proof}
Since $j_{\tilde{\pi},\psi^{-1}}(g)=j_{\pi,\psi}(g^{-1})$ for $g\in
B_n\omega_nB_n$, apply the above theorem.
\end{proof}

\section{Howe Vectors}

In this section, we will discuss Howe vectors, which were introduced
first by R.Howe. We will follow the exposition in \cite{Baruch:2005}
closely. Assume $\pi$ is irreducible and generic. \\

For a positive integer $m$, let $K_n^m=I_n+M_n(\mathfrak{p}^m)$,
here $\mathfrak{p}$ is the maximal ideal of $\mathcal{O}$. Use
$\varpi$ to denote an uniformizer of $F$. Let

$$d=\begin{pmatrix} 1 & \\ & \varpi^2 \\ & & \varpi^4 \\
& & & \ddots \\ & & & & \varpi^{2n-2}   \end{pmatrix}$$

Put $J_m=d^mK_n^md^{-m}$, $N_{n,m}=N_n\cap J_m$,
$\bar{N}_{n,m}=\bar{N}_n\cap J_m$, $\bar{B}_{n,m}=\bar{B}_n\cap
J_m$.
 Let $A_{n,m}=A_n\cap J_m$, then

$$J_m=\bar{N}_{n,m}A_{n,m}N_{n,m}= \bar{B}_{n,m}N_{n,m}$$

For $j\in J_m$, write $j=\bar{b}_jn_j$ with respect to the above
decomposition, as in \cite{Baruch:2005}, define a character $\psi_m$
on $J_m$ by

$$\psi_m(j)=\psi(n_j)$$

\begin{definition}
\label{def3} $W\in \mathcal{W}$ is called a Howe vector of $\pi$ if
for any $m$ large enough, we have

\begin{equation}
W(gj)=\psi_m(j)W(g) \hspace{1cm} \label{9}
\end{equation}
for all $g\in G_n$, $j\in J_m$.
\end{definition}

For each $W\in \mathcal{W}(\pi,\psi)$, let $M$ be a positive
constant such that $R(K_{n}^M).W=W$ where $R$ denotes the action of
right multiplication. For any $m>3M$, put

\[
W_m(g)=\int_{N_{n,m}}W(gu)\psi^{-1}(u)du
\]
then by Lemma 7.1 in \cite{Baruch:2005}, we have

\[
W_m(gj)=\psi_m(j)W_m(g), \forall j\in J_m, \ \ \forall g\in G_n
\]
This gives the existence of Howe vectors when $m$ is large enough.
The following lemma establishes its uniqueness in
Kirillov model. \\

\begin{thm}
\label{thm10} Assume $W\in \mathcal{W}$ satisfying (\ref{9}). Let
$h\in G_{n-1}$, if

$$W\begin{pmatrix} h & \\ & 1 \end{pmatrix}\neq 0$$
then $h\in N_{n-1}\bar{B}_{n-1,m}$. Moreover

$$W\begin{pmatrix} h & \\ & 1 \end{pmatrix}=\psi(u)W(I)$$
if $h=u\bar{b}$, with $u\in N_{n-1}$, $\bar{b}\in \bar{B}_{n-1,m}$.
\end{thm}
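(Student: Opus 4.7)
The argument has two parts. The ``moreover'' clause is immediate: if $h = u\bar b$ with $u \in N_{n-1}$ and $\bar b \in \bar B_{n-1,m}$, then $\begin{pmatrix} \bar b & \\ & 1 \end{pmatrix}$ lies in $\bar B_{n,m}$, on which $\psi_m$ is trivial by definition, so $(\ref{9})$ applied at $g = I$ gives $W\begin{pmatrix} \bar b & \\ & 1 \end{pmatrix} = W(I)$, and left $\psi$-equivariance then yields $W\begin{pmatrix} h & \\ & 1 \end{pmatrix} = \psi(u) W(I)$.

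For the support statement I would apply $(\ref{9})$ with $g = \begin{pmatrix} h & \\ & 1 \end{pmatrix}$ and $j = I + x E_{k,n}$, the elementary upper unipotent perturbing only the last column (here $E_{k,n}$ denotes the matrix with a single $1$ at position $(k,n)$). A direct multiplication yields
\[
\begin{pmatrix} h & \\ & 1 \end{pmatrix}(I + x E_{k,n}) = \begin{pmatrix} I_{n-1} & x h e_k \\ & 1 \end{pmatrix}\begin{pmatrix} h & \\ & 1 \end{pmatrix},
\]
so comparing left $\psi$-equivariance on $N_n$ with the right $\psi_m$-equivariance from $(\ref{9})$, against the hypothesis $W\begin{pmatrix} h & \\ & 1 \end{pmatrix} \neq 0$, forces $\psi(xh_{n-1,k}) = \psi_m(I + xE_{k,n})$ for every $x$ in the ideal $\mathfrak{p}^{m(1-2(n-k))}$ (the precise condition making $j \in J_m$). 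Since $\psi$ has conductor $\mathcal{O}$, this pins the last row of $h$ down to the conditions characterizing the last row of an element of $\bar B_{n-1,m}$, namely $h_{n-1,n-1} \in 1+\mathfrak{p}^m$ and $h_{n-1,k} \in \mathfrak{p}^{m(2n-2k-1)}$ for $k<n-1$.

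Next I would iterate. Construct $\bar b^{(1)} \in \bar B_{n-1,m}$ with this prescribed last row and identity top block; because $\begin{pmatrix}\bar b^{(1)} & \\ & 1 \end{pmatrix} \in \bar B_{n,m}$ has trivial $\psi_m$, replacing $h$ by $h(\bar b^{(1)})^{-1}$ preserves the value of $W\begin{pmatrix}\cdot & \\ & 1\end{pmatrix}$. The reduced matrix has last row $(0,\dots,0,1)$, hence factors as $u^{(1)}\begin{pmatrix}h^{(1,1)} & \\ & 1\end{pmatrix}$ with $u^{(1)} \in N_{n-1}$ and $h^{(1,1)} \in G_{n-2}$, and applying the same row-analysis with $j = I + xE_{k,n-1}$ constrains the last row of $h^{(1,1)}$ to match an element of $\bar B_{n-2,m}$. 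Iterating $n-1$ times peels off all the rows and assembles the required decomposition $h = u\bar b$ with $u \in N_{n-1}$ and $\bar b \in \bar B_{n-1,m}$ (stability of $\bar B_{n-1,m}$ under block-diagonal embedding of $\bar B_{k,m}$ for $k<n-1$ is what ensures the accumulated lower-triangular factor lies in the correct group). The principal obstacle is precisely this bookkeeping: after the first reduction the intermediate matrix is no longer block-diagonal but carries a nontrivial ``upper column'' $v^{(1)}$, and at every subsequent stage one must carefully split the product into an $N_n$-factor (to be absorbed via left $\psi$-equivariance) and a $J_m \cap \bar B_n$-factor (on which $\psi_m$ vanishes) before extracting the next row constraint.
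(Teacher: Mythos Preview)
Your proposal is correct and follows essentially the same route as the paper: use right $J_m$-invariance against elements of the last column of $N_n$ to pin down the bottom row of $h$, peel that row off as a $\bar B_{n,m}$-factor on the right together with an $N_{n-1}$-factor on the left, and then induct on the remaining $G_{n-2}$-block. The only cosmetic difference is that the paper tests with the full column $\begin{pmatrix} I_{n-1} & u \\ & 1\end{pmatrix}$ at once rather than one $E_{k,n}$ at a time, and writes the factorization $h=\begin{pmatrix} I_{n-2} & y \\ & 1\end{pmatrix}\begin{pmatrix} g & \\ x & a\end{pmatrix}$ explicitly so that the reduced matrix is immediately block-diagonal---which in fact dissolves the ``upper column'' bookkeeping you flag as the principal obstacle.
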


$\bullet$ Howe vectors were first introduced by R.Howe in an
unpublished paper (\cite{Ho}), in which he proved certain existence
and uniqueness properties of such vectors based on Gelfand-Kazhdan
method. We will below give an elementary proof of this theorem which
calculates the Howe vectors in Kirillov models. This result also
provides Howe vector as an candidate for the `unramified' vector
other than new vectors even in the `ramified' representations. \\

\begin{proof}
We will use an inductive argument. Write

$$h=\begin{pmatrix} h_{11} & \cdots & h_{1,n-1} \\ & \cdots &
 \\ h_{n-1,1} & \cdots & h_{n-1,n-1} \end{pmatrix}$$

Take $$u=\begin{pmatrix} I_{n-1} & u \\  & 1 \end{pmatrix}=
\begin{pmatrix} 1 & 0 & \cdots & u_1 \\  & 1 & 0 \cdots & u_2 \\
 & & \cdots & & \\  & &  1 & u_{n-1} \\  & & &  1   \end{pmatrix}\in J_m$$

We have

$$\psi(u_{n-1})W\begin{pmatrix} h & \\ & 1 \end{pmatrix}=
W\left(\begin{pmatrix} h & \\ & 1 \end{pmatrix}
\begin{pmatrix} I_{n-1} & u \\  & 1 \end{pmatrix}\right)$$
$$=W\begin{pmatrix} h & h.u \\  & 1 \end{pmatrix}=
W\left(\begin{pmatrix} I_{n-1} & h.u \\  & 1 \end{pmatrix}
\begin{pmatrix} h & \\  & 1 \end{pmatrix}\right)$$
$$=\psi(\sum_{i=1}^{n-1} h_{n-1,i}u_i)
W\begin{pmatrix} h & \\  & 1 \end{pmatrix}$$

Since $W\begin{pmatrix} h & \\  & 1 \end{pmatrix}\neq 0$, we get

$$\psi(\sum_{i=1}^{n-2}h_{n-1,i}u_i+(h_{n-1,n-1}-1)u_{n-1} )=1$$

Note that $u_i\in \mathfrak{p}^{(2i-2n+1)m}$, $i=1,2,...,n-1$, it
follows that $h_{n-1,i}\in \mathfrak{p}^{(2n-1-2i)m}$, $i=1,2,...,
n-2$, and $h_{n-1,n-1}\in 1+\mathfrak{p}^m$. So we may write

$$h=\begin{pmatrix} I_{n-2} & y \\  & 1 \end{pmatrix}\begin{pmatrix}
g & \\  x & a \end{pmatrix}=\begin{pmatrix} I_{n-2} & y \\  & 1
\end{pmatrix}\begin{pmatrix} g & \\  & 1
\end{pmatrix}\begin{pmatrix} I_{n-2} & \\ x & a \end{pmatrix}$$ with
$x=(h_{n-1,1},..., h_{n-1,n-2} )$, $a=h_{n-1,n-1}$,
$y=h_{n-1,n-1}^{-1}{^t}(h_{1,n-1},h_{2,n-1},...,h_{n-2,n-1} )$,
$g=h_{n-2}-y\cdot x$, where $y$ is a column vector, $x$ is a row
vector, and

$$h_{n-2}=\begin{pmatrix} h_{11} & \cdots & h_{1,n-2} \\
& \cdots &
\\ h_{n-2,1} & \cdots & h_{n-2,n-2} \end{pmatrix}$$

Since $j=\begin{pmatrix} I_{n-2} & \\ x & a \end{pmatrix}\in J_m$,
and by the assumption on $W$, we get

$$W\begin{pmatrix} h & \\  & 1
\end{pmatrix}=W\left(\begin{pmatrix} I_{n-2} & y \\  & 1 \\ & & 1
\end{pmatrix}\begin{pmatrix} g & \\  & 1 \\ & & 1
\end{pmatrix}\begin{pmatrix} I_{n-2} & \\ x & a \\ & & 1
\end{pmatrix}\right)$$
$$=\psi(y_{n-2})\psi_m(j)W\begin{pmatrix} g & \\  & 1 \\ & & 1 \end{pmatrix}$$

Note that $\psi_m(j)=1$ and it follows that

$$W\begin{pmatrix} g & \\  & 1 \\ & & 1 \end{pmatrix} \neq 0$$
and now we can argue inductively to get the result.
\end{proof}

$\bullet$ It follows from this lemma that $W(I)\ne 0$ for Howe
vectors, we will normalize it so that $W(I)=1$. \\

We apply the kernel formula to Howe vector to prove the following
result. \\

\begin{proposition}
\label{prop2} Assume $\pi$ is supercuspidal. Fix
$b=diag(b_1,...,b_n)\in A_n$, choose $m$ large
enough so that \\

(1). $R(A_{n,m}).j_{\pi}(b\omega_n)=j_{\pi}(b\omega_n)$, and
$L(A_{n,m}).j_{\pi}(b\omega_n)=j_{\pi}(b\omega_n)$; \\

(2). $\frac{b_{i-1}}{b_i}\in
\mathfrak{p}^{-3m},i=3,...,n$       \\
Then

\[
W(b\omega_n)=vol(\bar{B}_{n-1,m})j_\pi(b\omega_n)
\]
\end{proposition}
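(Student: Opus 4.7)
The plan is to apply the weak kernel formula (Theorem \ref{thm1}) to $W(b\omega_n)$ and then use Theorem \ref{thm10} together with the $A_{n,m}$-invariance properties of $j_\pi(b\omega_n)$ to collapse the iterated integral into a single term. Writing
\[
h \;=\; \begin{pmatrix} a_1 & & & & \\ x_{21} & a_2 & & & \\ \vdots & & \ddots & & \\ x_{n-1,1} & \cdots & x_{n-1,n-2} & a_{n-1} & \\ & & & & 1 \end{pmatrix}
\]
for the lower triangular matrix in the kernel formula, I first observe that since $h$ is already lower triangular, the $N_{n-1}$-part in any $N_{n-1}\bar{B}_{n-1,m}$-decomposition of its top $(n-1)\times(n-1)$ block must be trivial. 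Theorem \ref{thm10} therefore forces the integrand to vanish unless this top block lies in $\bar{B}_{n-1,m}$, which translates to the explicit constraints $a_i\in 1+\mathfrak{p}^m$ and $x_{ij}\in \mathfrak{p}^{(2(i-j)+1)m}$. On this restricted domain, the normalization $W(I)=1$ gives $W(h)=1$, and each factor $|a_i|^{-(n-i)}$ in the measure equals $1$.

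The core step is then to prove $j_\pi(b\omega_n h^{-1})=j_\pi(b\omega_n)$ on this domain. I would decompose $h^{-1}=\bar u\,d$ with $\bar u$ lower unipotent in $G_n$ and $d=\diag(a_1^{-1},\ldots,a_{n-1}^{-1},1)$, and conjugate through $\omega_n$ to write
\[
b\omega_n h^{-1} \;=\; (bub^{-1})\cdot (d'b)\cdot \omega_n,
\]
where $u=\omega_n\bar u\omega_n^{-1}\in N_n$ and $d'=\omega_n d\omega_n^{-1}=\diag(1,a_{n-1}^{-1},\ldots,a_1^{-1})$. Since $a_i\in 1+\mathfrak{p}^m$, one has $d'\in A_{n,m}$, so assumption (1) gives $j_\pi(d'\cdot b\omega_n)=j_\pi(b\omega_n)$, and the remaining task is to show $\psi(bub^{-1})=1$.

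I expect this last verification to be the main technical obstacle. Expanding $h^{-1}$ via the Neumann series $(I+A^{-1}X)^{-1}A^{-1}$, where $A$ is the diagonal and $X$ the strictly lower part of the top block of $h$, and noting that there is no intermediate column for a single-step drop $k\to k-1$, one finds $u_{1,2}=0$ (the last row of $h^{-1}$ is trivial off position $(n,n)$) and $u_{i,i+1}=-a_{n+1-i}^{-1}\,x_{n+1-i,\,n-i}$ exactly, for $i=2,\ldots,n-1$. In particular $\val(u_{i,i+1})\ge 3m$. Combining with $\val(b_i/b_{i+1})\ge -3m$ from assumption (2), every superdiagonal entry of $bub^{-1}$ lies in $\mathcal O$; since $\psi$ has conductor $\mathcal O$, this yields $\psi(bub^{-1})=1$. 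After these simplifications the integrand is the constant $j_\pi(b\omega_n)$, and the remaining integral of $\prod_i da_i\prod_{i>j}dx_{ij}$ over the product of balls $1+\mathfrak{p}^m$ and $\mathfrak{p}^{(2(i-j)+1)m}$ is exactly $\vol(\bar{B}_{n-1,m})$, yielding the claimed identity. Assumption (2) is used only at this valuation step, to cancel the negative valuations coming from conjugation by $b$.
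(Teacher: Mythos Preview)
Your proposal is correct and follows essentially the same route as the paper's proof: apply the weak kernel formula, use Theorem~\ref{thm10} to restrict the integration to $\bar{B}_{n-1,m}$ where $W(h)=1$ and $|a_i|=1$, then factor $b\omega_n h^{-1}$ into a unipotent piece (handled via the $\psi$-equivariance of $j_\pi$ and killed using assumption (2)) and a diagonal piece (absorbed via assumption (1)), leaving the constant $j_\pi(b\omega_n)$ times $\vol(\bar{B}_{n-1,m})$. The only cosmetic difference is that the paper parametrizes the matrix as $xa$ and invokes the \emph{right} $A_{n,m}$-invariance $j_\pi(b\omega_n a^{-1})=j_\pi(b\omega_n)$, whereas you write $h^{-1}=\bar u\,d$, push $d$ through $\omega_n$, and invoke the \emph{left} invariance $j_\pi(d'b\omega_n)=j_\pi(b\omega_n)$; both are contained in hypothesis (1) and lead to the identical valuation estimate on the superdiagonal.
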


\begin{proof} We first note that although we don't know whether the weak kernel
formula is absolutely convergent, but when applying it to Howe
vectors, by Theorem \ref{thm10}, Howe vectors have nice compact
support modulo $N_{n-1}$ in the Kirillov model, hence in this case
the weak kernel formula is absolutely convergent. Write

\[
x=\begin{pmatrix} 1 & \\ x_{21} & 1 \\ & & \ddots \\
x_{n-1,1} & \cdots & x_{n-1,n-2} & 1 \\ & & & &  1
\end{pmatrix}
\]
and

\[
a=diag(a_1,a_2,...,a_{n-1},1)
\]
Apply the kernel formula to Howe vector $W_m$ and by Theorem
\ref{thm10}, we find

\[
W_m(b\omega_n)=\int_{\bar{B}_{n-1,m}}
j_\pi(b\omega_n(xa)^{-1})da_1dx_{21}da_2\cdots dx_{n-1,1}\cdots
dx_{n-1,n-2}da_{n-1}
\]
Note that $b\omega_na^{-1}x^{-1}a\omega_nb^{-1}$ is a upper
triangular unipotent matrix and

\[
\psi(b\omega_na^{-1}x^{-1}a\omega_nb^{-1})=
\psi(-x_{21}\frac{a_1}{a_2}\frac{b_{n-1}}{b_{n}}-...-x_{n-1,n-2}
\frac{a_{n-2}}{a_{n-1}}\frac{b_2}{b_3})
\]
then the above integral equals

\[
\int_{\bar{B}_{n-1,m}}
\psi(-x_{21}\frac{a_1}{a_2}\frac{b_{n-1}}{b_{n}}-...-x_{n-1,n-2}
\frac{a_{n-2}}{a_{n-1}}\frac{b_2}{b_3})j_\pi(b\omega_na^{-1})da_1dx_{21}da_2\cdots
dx_{n-1,1}\cdots dx_{n-1,n-2}da_{n-1}
\]
\[
=\int_{\bar{B}_{n-1,m}}j_\pi(b\omega_na^{-1})da_1dx_{21}da_2\cdots
dx_{n-1,1}\cdots dx_{n-1,n-2}da_{n-1}
\]
since $a_i\in A_{n,m}, i=1,...,n-1$, $\frac{b_{i-1}}{b_i}\in
\mathfrak{p}^{-3m},i=3,...,n$ by assumption (2), and $x_{i,i-1}\in
\mathfrak{p}^{3m},i=2,...,n-1$. \\

Now by assumption (1), $j_\pi(b\omega_na^{-1})=j_\pi(b\omega_n)$,
and eventually we have

\[
W(b\omega_n)=j_\pi(b\omega_n)\int_{\bar{B}_{n-1,m}}da_1dx_{21}da_2\cdots
dx_{n-1,1}\cdots
dx_{n-1,n-2}da_{n-1}=vol(\bar{B}_{n-1,m})j_\pi(b\omega_n)
\]
\end{proof}

\section{Bessel distributions and Bessel functions}

In this section, we will show for supercuspidal representation
$\pi$, the Bessel function $j_0(g)$ defined in section 3 via Bessel
distribution, is equal to the Bessel function $j_{\pi}(g)$, defined
in section 2 via uniqueness of Whittaker functional. We first review
some results and constructions in \cite{Baruch:2001}, which will be
useful for our purpose. \\

As in \cite{Baruch:2001}, use $L(\omega_n).f$ to denote the left
translation by $\omega_n$ on $f$, for $f$ a locally constant
compactly supported function of $G_n$. Then this action induces an
action on distributions, still denoted as $L(\omega_n)$. We now
consider the distribution $J=L(\omega_n).B_{l,l'}$, where $B_{l,l'}$
is the Bessel distribution defined in section 3. An important result
proved in \cite{Baruch:2001} is that, the restriction of $J$ to
$\bar{N}_nA_nN_n$  is given by the locally constant function $j_0$,
and the restriction of $B_{l,l'}$ to $N_n\omega_nA_nN_n$ is then
given by $j_{\pi}=L(\omega_n).j_0$. We next describe the method used
to prove
this fact in section 3.3 of \cite{Baruch:2001}. \\

We first transform the distribution $J$ on $Y=\bar{N}_nA_nN_n$ to a
distribution $\sigma_J$ on $A_n$ using the constructions in
\cite{Baruch:2001}. For every $f\in C_c^{\infty}(Y)$, define
$\beta_f\in C_c^{\infty}(A_n)$ by

\[
\beta_f(a)=\int_{\bar{N}_n\times
N_n}f(\bar{u}_1au_2)\psi(-\bar{u}_1)\psi(-u_2)du_1du_2
\]
where $\psi(\bar{u}_1)=\psi(\omega_n\bar{u}_1\omega_n)$. Then by
Proposition 1.12 in \cite{Shalika:1974}, the map sending $f$ to
$\beta_f$ is a surjective map from $C_c^{\infty}(Y)$ onto
$C_c^{\infty}(A_n)$, and there exists a unique distribution
$\sigma_J$ on $A_n$ with

\[
J(f)=\sigma_J(\beta_f)
\]
Moreover, if the distribution $\sigma_J$ on $A_n$ is given by a
locally constant function $\phi(a)$, then the distribution $J$ on
$Y$ is given by the locally constant function
$\psi(\bar{u}_1)\psi(u_2)\phi(a)\Delta^{-1}(a)$, where $\Delta$
satisfies $dg=\Delta(a)d\bar{u}_1dadu_2$ on $Y$. \\

To show $\sigma_J$ is given by some locally constant $\phi(a)$, we
need to introduce the following concept as in \cite{Baruch:2001},
specializing to our case.

\begin{definition}
\label{def4} Let $\Theta$ be a distribution on $A_n$. $\Theta$ is
said to be \textit{admissible} if for any $a\in A_n$, there exists
some compact open subgroup $K$ (depending on $a$) of $A_n$, such
that for every nontrivial character $\chi$ of $K$ we have
$\Theta(\chi_a)=0$, where $\chi_a$ is the function defined on $aK$
by $\chi_a(ak)=\chi(k)$, $k\in K$.
\end{definition}

We then have the following lemma.

\begin{lemma}
\label{le4} A distribution $\Theta$ on $A_n$ is admissible if and
only if there exists a locally constant function $\theta$ on $A_n$
such that
\[
\Theta(f)=\int_{A_n} \theta(a)f(a)da
\]
for all $f\in C_c^{\infty}(A_n)$. Moreover, the value of $\theta(a)$
if given by $\frac{1}{vol(K(a))}\Theta(1_a)$, where $K(a)$ is any
compact open subgroup of $A_n$ satisfying Definition \ref{def4}, and
$1$ denotes the trivial character of $K(a)$.
\end{lemma}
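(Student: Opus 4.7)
\smallskip

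\noindent\textbf{Proof proposal for Lemma \ref{le4}.}
The easy direction is the ``if'' part. Assume $\Theta(f)=\int_{A_n}\theta(a)f(a)\,da$ for some locally constant $\theta$. Given $a\in A_n$, local constancy of $\theta$ provides a compact open subgroup $K=K(a)$ of $A_n$ with $\theta(ak)=\theta(a)$ for all $k\in K$. For any nontrivial character $\chi$ of $K$ (necessarily with open kernel, so that $\chi_a\in C_c^\infty(A_n)$), we get
\[
\Theta(\chi_a)=\int_{aK}\theta(g)\chi(a^{-1}g)\,dg=\theta(a)\int_K\chi(k)\,dk=0
\]
by orthogonality of characters on the compact abelian group $K$. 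This shows admissibility.

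For the converse, suppose $\Theta$ is admissible and define $\theta(a)=\Theta(1_a)/\mathrm{vol}(K(a))$ for each $a$, where $K(a)$ is any compact open subgroup satisfying Definition \ref{def4} at $a$. The core technical lemma I will prove first is the following: if $K$ satisfies the admissibility condition at $a$ and $f\in C_c^\infty(A_n)$ is supported in $aK$, then
\[
\Theta(f)=\frac{\Theta(1_{aK})}{\mathrm{vol}(K)}\int_{aK}f(g)\,dg.
\]
To see this, pick a compact open $K'\subset K$ with respect to which $f$ is right invariant; then the function $k\mapsto f(ak)$ factors through the finite group $K/K'$, so a finite Fourier expansion gives $f=\sum_\chi c_\chi\chi_a$, where the sum runs over characters of $K$ trivial on $K'$ and $c_1=\mathrm{vol}(K)^{-1}\int_{aK}f$. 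Applying $\Theta$ and using admissibility kills all terms with $\chi\neq 1$, yielding the claim.

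From this key formula, the remaining assertions follow cleanly. First, the definition of $\theta(a)$ is independent of $K(a)$: any smaller compact open subgroup $K'\subset K(a)$ also satisfies the admissibility condition at $a$, because for a nontrivial character $\chi'$ of $K'$ the function $\chi'_a$ is supported in $aK'\subset aK(a)$ and integrates to $\int_{K'}\chi'(k)dk=0$, so the key formula gives $\Theta(\chi'_a)=0$. Applied to $f=1_{aK'}$ for $K'\subset K(a)$, the formula gives $\Theta(1_{aK'})/\mathrm{vol}(K')=\Theta(1_{aK(a)})/\mathrm{vol}(K(a))$, which handles the case $K_1\subset K_2$; the general case follows by comparing both with $K_1\cap K_2$. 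Next, $\theta$ is constant on $aK(a)$: for $b=ak_0$ with $k_0\in K(a)$, the identity $\chi_b=\chi(k_0)^{-1}\chi_a$ shows $K(a)$ also works at $b$, and then $\theta(b)=\Theta(1_{bK(a)})/\mathrm{vol}(K(a))=\Theta(1_{aK(a)})/\mathrm{vol}(K(a))=\theta(a)$. Hence $\theta$ is locally constant.

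Finally, for an arbitrary $f\in C_c^\infty(A_n)$, cover its compact support by finitely many cosets $a_iK(a_i)$ and split $f=\sum_i f_i$ with $f_i$ supported in $a_iK(a_i)$ (standard l.c.t.d.\ partition of unity). Applying the key formula to each piece and using that $\theta$ is constant on $a_iK(a_i)$ with value $\theta(a_i)$ gives
\[
\Theta(f_i)=\theta(a_i)\int_{a_iK(a_i)}f_i=\int_{A_n}\theta(g)f_i(g)\,dg,
\]
and summing yields $\Theta(f)=\int_{A_n}\theta(g)f(g)\,dg$. The main obstacle is the well-definedness step, which rests entirely on the finite Fourier decomposition trick; once that key formula is in place, everything else is bookkeeping.
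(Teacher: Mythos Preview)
Your proof is correct. The paper does not supply its own argument here but simply invokes Lemma~3.2 of \cite{Baruch:2001}; your finite Fourier expansion on $K/K'$ followed by the partition-of-unity reduction is the standard proof of that lemma, so you have filled in precisely what the paper defers to the reference.
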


\begin{proof}
This is exactly Lemma 3.2 in \cite{Baruch:2001} applied to our case.
\end{proof}

In view of the above discussion, it suffices to show $\sigma_J$ is
admissible, and this is done in section 3.3 of \cite{Baruch:2001}.
Moreover fix $a=diag(a_1,...,a_n)\in A_n$, choose $m$ large enough.
More precisely, let $M>0$ be a positive constant as in Corollary 3.5
in \cite{Baruch:2001}, we then
require $m$ to satisfy that \\

(1). $\psi$ is trivial on $\omega_n a \bar{N}_{n,m} a^{-1}\omega_n$
and $\omega_n a^{-1} \bar{N}_{n,m} a\omega_n$; \\

(2). $R(A_{n,m}).j_{\pi}(a\omega_n)=j_{\pi}(a\omega_n)$, and
$L(A_{n,m}).j_{\pi}(a\omega_n)=j_{\pi}(a\omega_n)$. \\

(3). $m\ge M$. \\
As $\psi$ has conductor exactly $\CO$, if $m$ is large (1) can then
be satisfied. Because $j_\pi$ is locally constant, for a given $a\in
A_n$, (2) can be satisfied once $m$ is large. Hence one can choose
$m$ large enough satisfying all the above (1),(2),(3). \\

Then $A_{n,m}=A_n\cap J_m$, which is a compact open subgroup of
$A_n$, will satisfy Definition \ref{def4} by assumption (1) as shown
in \cite{Baruch:2001}, and then by Lemma \ref{le4},
$\phi(a)=\sigma_J(\frac{1}{vol(A_{n,m})}\chi_a)$ where $\chi_a$ is
the characteristic function of $aA_{n,m}$.   \\

Now let $\phi_1,\phi_2$ be a multiple of the characteristic function
of $\bar{N}_n\cap K_n$, and $N_n\cap K_n$, respectively, with

$$\int_{\bar{N}_n}
\phi_1(\bar{u})\psi(-\omega_n\bar{u}\omega_n)d\bar{u}=1$$ and
$$\int_{N_n} \phi_2(u)\psi(-u) du=1$$
then $\Phi_a=\frac{1}{vol(A_{n,m})} \phi_1\chi_a\phi_2$ is a locally
constant compactly supported function on $\bar{N}_nA_nN_n$. Note
that $\frac{1}{vol(A_{n,m})}\chi_a=\beta_{\Phi_a}$. \\

Hence $j_{\pi}(a)=\phi(a)\Delta^{-1}(a)
=\sigma_J(\frac{1}{vol(A_{n,m})}\chi_a)\Delta^{-1}(a)
=J(\Phi_a)\Delta^{-1}(a)$. Then

\[
j_0(\omega_na)=j_{\pi}(a)=J(\Phi_a)\Delta^{-1}(a)
=B_{l,l'}(L(\omega_n)\Phi_a)\Delta^{-1}(a).
\] \\

Note that $L(\omega_n).\Phi_a$ belongs to
$C_c^{\infty}(N_n\omega_nA_nN_n)$, and hence can be viewed as an
element in $C_c^{\infty}(G_n)$. Choose another positive integer
$m_1$ large enough so that \\

(1). $L(\bar{b}).L(\omega_n).\Phi_a=L(\omega_n).\Phi_a$ for any
$b\in \bar{B}_{n-1,m_1}$; \\

(2). $m_1\ge m$; \\

(3). $\frac{a_i}{a_{i-1}}\in \mathfrak{p}^{-3m_1}, i=3,...,n$. \\
Since $L(\omega_n).\Phi_a\in C_c^{\infty}(G_n)$, hence it is
bi-invariant under some open compact subgroup, then (1) is satisfied
if $m_1$ is large. Thus we can choose $m_1$ large enough to satisfy
all above (1),(2),(3).   \\

Apply Lemma \ref{le1} to $f=L(\omega_n)\Phi_a$,
$\hat{W}=\hat{W}_{m_1}$ the Howe vector, then we find by Theorem
\ref{thm10}, the right hand side of Lemma \ref{le1} is

\begin{eqnarray*}
&&\int_{N_{n-1}\backslash G_{n-1}} B_{l,l'}\left( L\begin{pmatrix} h
& \\ & 1 \end{pmatrix}.L(\omega_n).\Phi_a  \right)\hat{W}_{m_1} \begin{pmatrix} h & \\
& 1 \end{pmatrix}dh   \\
&=& B_{l,l'}(L(\omega_n)\Phi_a) vol(\bar{B}_{n-1,m_1})
\end{eqnarray*}
While the left side integral of Lemma \ref{le1} is
\begin{eqnarray*}
&& \int f(g)\hat{W}_{m_1}(g^{-1})dg \\
&=& \frac{1}{vol(A_{n,m})}\int \phi_1(\omega_nu_1\omega_n)\psi(-u_1)
\phi_2(u_2)\psi(-u_2)du_1du_2 \int_{A_{n,m}}
\hat{W}_{m_1}(h^{-1}a^{-1}\omega_n ) \Delta(ah) dh   \\
&=&\frac{1}{vol(A_{n,m})}\Delta(a)
\int_{A_{n,m}}\hat{W}_{m_1}(h^{-1}a^{-1}\omega_n ) \Delta(h) dh \\
\end{eqnarray*}
For any $h'\in A_{n,m_1}\subset A_{n,m}$, $h\in A_{n,m}$, since
$j_{\tilde{\pi}}(g^{-1})=j_{\pi}(g)$, we have

\[
j_{\tilde{\pi}}(h'h^{-1}a^{-1}\omega_n)=j_{\pi}(\omega_nahh'^{-1})
=j_{\pi}(\omega_nah)=j_{\tilde{\pi}}(h^{-1}a^{-1}\omega_n)
\]

\[
j_{\tilde{\pi}}(h^{-1}a^{-1}\omega_nh')=j_{\pi}(h'^{-1}\omega_nah)
=j_{\pi}(\omega_nah)=j_{\tilde{\pi}}(h^{-1}a^{-1}\omega_n)
\]
and $\frac{h_ia_i}{h_{i-1}a_{i-1}}\in \mathfrak{p}^{-3m_1},
i=3,...,n$ if $h=diag(h_1,...,h_n)$. Hence we can apply Proposition
\ref{prop2} to $\hat{W}_{m_1}(h^{-1}a^{-1}\omega_n )$ in the above
integral and get

\begin{eqnarray*}
&& \int f(g)\hat{W}_{m_1}(g^{-1})dg \\
&=& \frac{1}{vol(A_{n,m})}\Delta(a) \int_{A_{n,m}}
j_{\tilde{\pi}}(h^{-1}a^{-1}\omega_n)vol(\bar{B}_{n-1,m_1})\Delta(h)dh
\\
&=& \frac{1}{vol(A_{n,m})}\Delta(a)\int_{A_{n,m}}
j_{\pi}(\omega_nah)vol(\bar{B}_{n-1,m_1})dh
\end{eqnarray*}
where the last equality follows from the facts that
$j_{\tilde{\pi}}(g^{-1})=j_{\pi}(g)$ and $\Delta(h)=1$ when
restricted to $A_{n,m}$. \\

Now by assumption (2) on $m$, the above equals

\[
\Delta(a)j_{\pi}(\omega_na)vol(\bar{B}_{n-1,m_1})
\]
which is the left hand side of Lemma \ref{le1}. \\

Combining both sides of Lemma \ref{le1}, we get

\[
\Delta(a)j_{\pi}(\omega_na)vol(\bar{B}_{n-1,m_1})=B_{l,l'}(L(\omega_n)\Phi_a)
vol(\bar{B}_{n-1,m_1})
\]

Note that $B_{l,l'}(L(\omega_n)\Phi_a)=j_0(\omega_na)\Delta(a)$,
immediately we have

\begin{thm}
\label{thm2} Assume $\pi$ is supercuspidal. For all $g\in
N_nA_n\omega_n N_n$, we have

 $$j_0(g)=j_{\pi}(g)$$
 \end{thm}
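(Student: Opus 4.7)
The plan is to verify the identity $j_0(g) = j_\pi(g)$ on $N_n\omega_n A_n N_n$ pointwise at $g = \omega_n a$ for $a \in A_n$; since both sides transform by $\psi(u_1)\psi(u_2)$ under $g \mapsto u_1 g u_2$ with $u_i \in N_n$, this reduces the problem to the single slice $\{\omega_n a : a \in A_n\}$. The strategy, following Baruch's $GL_3$ argument in \cite{Baruch:2004}, is to apply Lemma \ref{le1} to a carefully chosen pair $(f, \hat W)$ where $f$ localizes $B_{l,l'}$ near $\omega_n a$ and $\hat W$ is a Howe vector on $\tilde\pi$ at a very deep level.

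First, I would fix $a = \diag(a_1,\ldots,a_n) \in A_n$ and, using the admissibility machinery of \cite{Baruch:2001} (specifically Lemma \ref{le4}), choose a level $m$ large enough that the compact open subgroup $A_{n,m}$ witnesses admissibility of the transferred distribution $\sigma_J$ at $a$ and that $j_\pi(a\omega_n)$ is bi-$A_{n,m}$-invariant. The associated bump function $\Phi_a = \tfrac{1}{\vol(A_{n,m})}\phi_1 \chi_a \phi_2$ on $\bar{N}_n A_n N_n$ is characterized by $\beta_{\Phi_a} = \tfrac{1}{\vol(A_{n,m})} \chi_a$, and unwinding the $f \mapsto \beta_f$ correspondence yields $j_0(\omega_n a)\Delta(a) = B_{l,l'}(L(\omega_n)\Phi_a)$. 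The proof therefore reduces to an explicit evaluation of the right hand side.

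Second, I would plug $f = L(\omega_n)\Phi_a$ and $\hat W = \hat W_{m_1}$ into Lemma \ref{le1}, where $m_1 \geq m$ is taken large enough that (i) $L(\omega_n)\Phi_a$ is left-invariant under $\bar{B}_{n-1,m_1}$ and (ii) the ratio hypotheses $a_i/a_{i-1} \in \mathfrak{p}^{-3m_1}$ of Proposition \ref{prop2} hold. By Theorem \ref{thm10}, the Kirillov restriction of $\hat W_{m_1}$ is supported on $N_{n-1}\bar{B}_{n-1,m_1}$ and takes the value $1$ on the identity coset, so the right hand side of Lemma \ref{le1} collapses to $B_{l,l'}(L(\omega_n)\Phi_a)\cdot\vol(\bar{B}_{n-1,m_1})$. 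On the other hand, inserting the definition of $\Phi_a$ into the left hand side and integrating out the unipotent variables leaves $\Delta(a) \int_{A_{n,m}} \hat W_{m_1}(h^{-1} a^{-1} \omega_n) \Delta(h)\, dh$; applying Proposition \ref{prop2} pointwise in $h$, the duality $j_{\tilde\pi}(g^{-1}) = j_\pi(g)$, and the bi-$A_{n,m}$-invariance of $j_\pi(\omega_n a)$ from the first step reduces this to $\Delta(a) j_\pi(\omega_n a) \vol(\bar{B}_{n-1,m_1})$. Cancelling the common factor $\vol(\bar{B}_{n-1,m_1})\Delta(a)$ then delivers $j_0(\omega_n a) = j_\pi(\omega_n a)$, and the full theorem follows by $(N_n, N_n)$-equivariance.

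The main technical obstacle I expect is the simultaneous tuning of the two levels $m$ and $m_1$. They must be picked to guarantee, all at once, the admissibility of $\sigma_J$ at $a$, the triviality of $\psi$ on $\omega_n a^{\pm 1}\bar{N}_{n,m} a^{\mp 1}\omega_n$, the bi-$A_{n,m}$-invariance of $j_\pi(\omega_n a)$, the left $\bar{B}_{n-1,m_1}$-invariance of $L(\omega_n)\Phi_a$, and the ratio condition needed to invoke Proposition \ref{prop2} uniformly for $h \in A_{n,m}$. Once one checks by taking $m$ sufficiently large and then $m_1$ still larger that all five conditions hold together, the chain of reductions above becomes essentially mechanical and the equality of the two Bessel functions is forced.
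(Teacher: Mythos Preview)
Your proposal is correct and follows essentially the same approach as the paper's own proof: reduce to $g=\omega_n a$ by $(N_n,N_n)$-equivariance, construct the bump $\Phi_a$ via the admissibility machinery of \cite{Baruch:2001}, apply Lemma~\ref{le1} with $f=L(\omega_n)\Phi_a$ and a deep-level Howe vector $\hat W_{m_1}$, collapse the right side via Theorem~\ref{thm10} and the left side via Proposition~\ref{prop2} together with $j_{\tilde\pi}(g^{-1})=j_\pi(g)$, and cancel. The two-stage choice of levels $m$ (for admissibility and bi-$A_{n,m}$-invariance of $j_\pi$) followed by $m_1\ge m$ (for left $\bar B_{n-1,m_1}$-invariance of $L(\omega_n)\Phi_a$ and the ratio condition $a_i/a_{i-1}\in\fp^{-3m_1}$) is exactly how the paper handles the tuning you flag as the main obstacle.
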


\section{General Case}

The aim of this section is to generalize \textbf{Theorem} \ref{thm2}
from supercuspidal case to general generic case. Now assume $\pi$ is
an irreducible admissible smooth generic representation of $G_n$
with Whittaker model $\CW(\pi,\psi)$. The point is to prove a weak
kernel formula for Howe vectors of $\pi$. This weak kernel formula
 is expected to hold for a wider class of
Whittaker functions, but currently we are only able to prove it for
Howe
vectors, which is sufficient for our purpose. \\

The proof is essentially the same as the proof of \textbf{Theorem}
\ref{thm1} with necessary modifications. Use $W_m$ to denote the
normalized Howe vector of level $m$ of $\pi$ as in section 5.  \\

\begin{thm}
\label{thm3} (weak kernel formula) For any $b\omega_n$,
$b=diag(b_1,...,b_n)\in A_n$, if $m$ is large enough, we have

$$W_m(b\omega_n)=$$
$$\int j_{\pi}\left(b\omega_n \begin{pmatrix} a_1 & \\ x_{21} & a_2 \\ & & \ddots \\
x_{n-1,1} & \cdots & x_{n-1,n-2} & a_{n-1} \\ & & & &  1
\end{pmatrix}^{-1} \right) W_m\begin{pmatrix} a_1 & \\ x_{21} & a_2 \\
& & \ddots \\ x_{n-1,1} & \cdots & x_{n-1,n-2} & a_{n-1}\\ & & & & 1
\end{pmatrix} $$
$$|a_1|^{-(n-1)}da_1|a_2|^{-(n-2)}dx_{21}da_2\cdots |a_{n-1}|^{-1}dx_{n-1,1}\cdots dx_{n-1,n-2}da_{n-1} $$
where the right side is an iterated integral, $a_i$ is integrated
over $F^{\times}\subset F$ for $i=1,...,n-1$, $x_{ij}$ is integrated
over $F$ for all relevant $i,j$, and all measures are additive
self-dual Haar measures on $F$.
\end{thm}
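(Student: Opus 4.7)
The plan is to mirror the inductive Fourier-inversion argument from Theorem \ref{thm1}, substituting its supercuspidal compact-support input (Proposition \ref{prop1}) with a substitute tailored to Howe vectors. The skeleton is unchanged: first establish the base identity at level $Y_{n-1}$ by Fourier inversion in a single scalar variable, then descend inductively by one matrix block at a time via the conjugation identity $b\omega_n h_i^{-1} = (\text{upper unipotent})(\text{diagonal})\, b\omega_n$, at each stage applying the formula already established together with a Fourier inversion in the new variables.

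The critical preliminary step is to establish, for $m$ sufficiently large depending on $b$, the analog of Lemma \ref{le2} for Howe vectors: the intermediate functions
\[
u_i \;\mapsto\; \int_{Y_i} W_m\!\left(b\omega_n y_i \begin{pmatrix} I & u_i & \\ & 1 & \\ & & I_{i-1}\end{pmatrix}\right)\psi^{-1}(y_i)\, dy_i
\]
appearing at each stage of the induction are compactly supported in $u_i$. Although the uniform compact support of $W(b\omega_n u)$ on $N_n$ (Proposition \ref{prop1}) is unavailable for general generic $\pi$, Howe vectors satisfy $W_m(gj) = \psi_m(j)W_m(g)$ for $j \in J_m$, and Theorem \ref{thm10} pins down their restriction to the Kirillov model as supported on $N_{n-1}\bar{B}_{n-1,m}$. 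Combining the $J_m$-equivariance with Iwasawa decomposition, one forces the argument $b\omega_n y_i v(u_i)$ into a compact set modulo $N_n$ whenever $W_m$ is nonzero, yielding the needed compact support once $m$ dominates the diagonal scaling factors of $b$. With this in place, the Fourier inversion and inductive descent from the proof of Theorem \ref{thm1} go through verbatim with $W$ replaced by $W_m$, the resulting iterated integral absorbing additive characters into $j_\pi$ via its Bessel transformation property $j_\pi(u_1 g u_2) = \psi(u_1)\psi(u_2)j_\pi(g)$.

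The main obstacle is the compact support statement itself, which in the supercuspidal case is freely supplied by Proposition \ref{prop1} as a consequence of compact support of $W$ modulo $Z_n N_n$. Here one must extract it from the rigid $J_m$-equivariance of $W_m$ together with the Kirillov-model description of Theorem \ref{thm10}, which requires careful bookkeeping of how $b\omega_n$ interacts with $J_m$ and with the Iwasawa decomposition. A secondary subtlety is that the translates $\pi(h_i) W_m$ that appear in the inductive step are not themselves Howe vectors; however, since the kernel formula is formulated as an iterated integral, it suffices that for each fixed value of the outer variables $a_i, x_{ij}$ the innermost Fourier integral converges, which can be arranged by choosing $m$ large enough at the outset so that all $n-1$ stages of descent work uniformly. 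Once this compact support is secured, the remainder of the argument is purely formal and parallels Theorem \ref{thm1} line by line.
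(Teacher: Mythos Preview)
Your proposal has a genuine gap at exactly the point you flag as a ``secondary subtlety.'' You propose to run the proof of Theorem~\ref{thm1} verbatim with $W$ replaced by $W_m$, handling the translates $\pi(h_i)W_m$ by choosing $m$ large enough at the outset. But the induction in Theorem~\ref{thm1} runs from the inside out: one first proves the innermost identity~(\ref{2}), then applies it to $\pi(h_2)W$, then to further translates, and so on. For this to work in the generic case one needs each translate $\pi(h_i)W_m$ to lie in the space $\mathcal{W}^0$ of \cite{Baruch:2005} (so that the relevant $N_n$-integrals stabilize and the intermediate functions $M_k$ are compactly supported). The paper explicitly remarks after Theorem~\ref{thm1} that invariance of $\mathcal{W}^0$ under such lower-triangular translates is \emph{not} known. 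Your fix of enlarging $m$ cannot rescue this: $m$ must be fixed before the integration begins, while the $h_i$ range over unbounded integration variables $a_i\in F^\times$, $x_{ij}\in F$.

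The paper's proof avoids this obstruction by \emph{reversing the direction of the induction}. It starts from the outermost layer: set $M_1(u_{n-1})=W_m\bigl(b\omega_n\bigl(\begin{smallmatrix}I_{n-1}&u_{n-1}\\&1\end{smallmatrix}\bigr)\bigr)$, whose compact support follows not from an Iwasawa argument but from the fact that $W_m\in\mathcal{W}^0$ for $m$ large (Theorems~5.7 and~7.3 of \cite{Baruch:2005}). Fourier inversion yields~(\ref{11}). The key new step is \textbf{Claim~1}: as a function of $h_{n-1}$, the integral $\int_{Y_2}W_m(b\omega_n h_{n-1}^{-1}y_2 h_{n-1})\psi(-y_2)\,dy_2$ is supported on $\bar{B}_{n,m}$. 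This is proved directly from the $J_m$-equivariance~(\ref{9}) by the same trick as in Theorem~\ref{thm10}. Once $h_{n-1}\in\bar{B}_{n,m}\subset J_m$, the Howe property gives $\pi(h_{n-1})W_m=W_m$, so the translate disappears and one is reduced to proving~(\ref{12}) with the \emph{same} Howe vector $W_m$, not a translate. Iterating with Claims~2,~3,~$\ldots$ peels off $h_{n-2},h_{n-3},\ldots$ in turn, each time confining the new variable to $\bar{B}_{n,m}$ before it can cause trouble. The final base case is then identical to~(\ref{2}).

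In short: the compact-support input is $W_m\in\mathcal{W}^0$, not an ad hoc Iwasawa/$J_m$ argument; and the induction must be restructured outside-in, with a support claim at each stage forcing $h_i\in\bar{B}_{n,m}$ so that translates of $W_m$ never actually arise.
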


We first note $W_m\begin{pmatrix} a_1 & \\ x_{21} & a_2 \\
& & \ddots \\ x_{n-1,1} & \cdots & x_{n-1,n-2} & a_{n-1}\\ & & & & 1
\end{pmatrix}\neq 0$ if and only if $\begin{pmatrix} a_1 & \\ x_{21} & a_2 \\
& & \ddots \\ x_{n-1,1} & \cdots & x_{n-1,n-2} & a_{n-1}\\ & & & & 1
\end{pmatrix}$ $\in \bar{B}_{n,m}$ by \textbf{Theorem} 5.2, in which case $W_m\begin{pmatrix} a_1 & \\ x_{21} & a_2 \\
& & \ddots \\ x_{n-1,1} & \cdots & x_{n-1,n-2} & a_{n-1}\\ & & & & 1
\end{pmatrix}=1$. This is very important for our proof in this
special case. \\

Introduce notations $h_i=\begin{pmatrix} &I_{i-1}&&& \\
x_{i,1}&...&x_{i,i-1}&a_i& \\ &&&& I_{n-i}\end{pmatrix}$. We also
use $h_i$ to denote the left upper corner $i\times i$ matrix when
there is no confusion. Note that
$\begin{pmatrix} a_1 & \\ x_{21} & a_2 \\
& & \ddots \\ x_{n-1,1} & \cdots & x_{n-1,n-2} & a_{n-1}\\ & & & & 1
\end{pmatrix}$ $=h_1h_2...h_{n-1}$. \\

\begin{proof}
Now let
$$M_1(u_{n-1})=W_m\left(b\omega_n \begin{pmatrix} I_{n-1} & u_{n-1} \\ & 1 \end{pmatrix}\right)$$
which is a compactly supported function in column vector $u_{n-1}$
by by Theorem 5.7 and Theorem 7.3 in \cite{Baruch:2005} as $W_m\in
\CW^0$ if $m$ is large enough. Its Fourier inversion formula is

\begin{eqnarray*}
M_1(u_{n-1}) &=&\int_{F^{n-1}}
\widehat{M}_1(v_{n-1})\psi(-v_{n-1}u_{n-1})dv_{n-1} \\
&=&\int_{F^{n-2}\times F^{\times}}
\widehat{M}_1(z_1,...,z_{n-1})\psi(-(z_1,...,z_{n-1})u_{n-1})dz_1...dz_{n-1}
\end{eqnarray*}
where we write $v_{n-1}=(z_1,...,z_{n-1})$, with $z_1,...,z_{n-2}\in
F$, $z_{n-1}\in F^{\times}$, and the last equality follows from the
facts that $dz_1...dz_{n-1}$ is the additive Haar measure and
$F^{n-2}\times F^{\times}$ is of full measure in $F^{n-1}$. \\

Put $u_{n-1}=0$, we get

\begin{equation}
M_1(0)=\int_{F^{n-2}\times F^{\times}}
\widehat{M}_1(z_1,...,z_{n-1})dz_1...dz_{n-1} \hspace{2.5cm}
\label{10}
\end{equation}

By the same computations as in section 4, we find

\[
|a_{n-1}|^{-1}\int_{Y_2} W(b\omega_n
h_{n-1}^{-1}y_2h_{n-1})\psi(-y_2)dy_2=\widehat{M}_1(x_{n-1,1},...,x_{n-1,n-2},
a_{n-1})
\]

So $(\ref{10})$ becomes

\[
W_m(b\omega_n)=\int_{F^{n-2}\times F^{\times}}
\widehat{M}_1(x_{n-1,1},...,x_{n-1,n-2},a_{n-1})dx_{n-1,1}...dx_{n-1,n-2}da_{n-1}
\]
\begin{equation}
=\int_{F^{n-2}\times F^{\times}} |a_{n-1}|^{-1}\int_{Y_2}
W_m(b\omega_n
h_{n-1}^{-1}y_2h_{n-1})\psi(-y_2)dy_2dx_{n-1,1}...dx_{n-1,n-2}da_{n-1}
\ \ \label{11}
\end{equation} \\

\textbf{Claim 1}: As a function of $h_{n-1}$, $\int_{Y_2}
W_m(b\omega_n h_{n-1}^{-1}y_2h_{n-1})\psi(-y_2)dy_2$ has support
in $\bar{B}_{n,m}$. \\

\textit{Proof of Claim 1:} The proof is similar to \textbf{Theorem}
5.2. Take

\[
u=\begin{pmatrix} I_{n-1} & u \\  & 1 \end{pmatrix}=
\begin{pmatrix} 1 & 0 & \cdots & u_1 \\  & 1 & 0 \cdots & u_2 \\
 & & \cdots & & \\  & &  1 & u_{n-1} \\  & & &  1   \end{pmatrix}\in J_m
\]
then
\[
\psi(u_{n-1})\int_{Y_2} W_m(b\omega_n
h_{n-1}^{-1}y_2h_{n-1})\psi(-y_2)dy_2=\int_{Y_2} W_m(b\omega_n
h_{n-1}^{-1}y_2h_{n-1}u)\psi(-y_2)dy_2
\]
\[
=\int_{Y_2} W_m(b\omega_n
h_{n-1}^{-1}y_2h_{n-1}uh_{n-1}^{-1}h_{n-1})\psi(-y_2)dy_2=\int_{Y_2}
W_m(b\omega_n
h_{n-1}^{-1}y_2h_{n-1})\psi(-y_2)\psi(h_{n-1}uh_{n-1}^{-1})dy_2
\]
where in the last equality we change variable
$y_2h_{n-1}uh_{n-1}^{-1}\to y_2$. \\

Thus we find that if $\int_{Y_2} W_m(b\omega_n
h_{n-1}^{-1}y_2h_{n-1})\psi(-y_2)dy_2\neq 0$, then

\[
\psi(u_{n-1})=\psi(h_{n-1}uh_{n-1}^{-1})
\]
As $u\in J_m$ is arbitrary, this forces $h_{n-1}\in
\bar{B}_{n,m}$, which proves the claim. \\

\hspace{14cm} $\Box$ \\

Let's continue the proof of the theorem. Compare $(\ref{11})$ with
the desired formula in theorem and note the support of $W_m$ and the
claim, it suffices to show

\[
\int_{Y_2} W_m(b\omega_n h_{n-1}^{-1}y_2h_{n-1})\psi(-y_2)dy_2=\int
\]
\[  j_{\pi}\left(b\omega_nh_{n-1}^{-1}
\begin{pmatrix} a_1 & \\ x_{21} & a_2
\\ & & \ddots \\ x_{n-2,1} & \cdots & x_{n-2,n-3} & a_{n-2} \\ & & & &  I_2
\end{pmatrix}^{-1} \right)
W_m\left( \begin{pmatrix} a_1 & \\ x_{21} & a_2
\\ & & \ddots \\ x_{n-2,1} & \cdots & x_{n-2,n-3} & a_{n-2} \\ & & & &  I_2
\end{pmatrix}h_{n-1}\right)
\]
\[
|a_1|^{-(n-1)}da_1|a_2|^{-(n-2)}dx_{21}da_2\cdots
|a_{n-2}|^{-2}dx_{n-2,1}\cdots dx_{n-2,n-3}da_{n-2}
\]
for $h_{n-1}\in \bar{B}_{n,m}$. \\

By properties of Howe vectors, this is equivalent to
\[
\int_{Y_2} W_m(b\omega_n h_{n-1}^{-1}y_2)\psi(-y_2)dy_2=\int
\]
\[  j_{\pi}\left(b\omega_nh_{n-1}^{-1}
\begin{pmatrix} a_1 & \\ x_{21} & a_2
\\ & & \ddots \\ x_{n-2,1} & \cdots & x_{n-2,n-3} & a_{n-2} \\ & & & &  I_2
\end{pmatrix}^{-1} \right)
W_m \begin{pmatrix} a_1 & \\ x_{21} & a_2
\\ & & \ddots \\ x_{n-2,1} & \cdots & x_{n-2,n-3} & a_{n-2} \\ & & & &  I_2
\end{pmatrix}
\]
\begin{equation}
|a_1|^{-(n-1)}da_1|a_2|^{-(n-2)}dx_{21}da_2\cdots
|a_{n-2}|^{-2}dx_{n-2,1}\cdots dx_{n-2,n-3}da_{n-2} \ \ \ \  \ \
\label{12}
\end{equation}
for $h_{n-1}\in \bar{B}_{n,m}$. \\

To prove $(\ref{12})$, let
\[
M_2(u_{n-2})=\int_{Y_2} W_m \left(b\omega_n h_{n-1}^{-1}y_2
\begin{pmatrix} I_{n-2}& u_{n-2}& \\ & 1& \\ &&1 \end{pmatrix} \right)\psi(-y_2)dy_2
\]
which is compactly supported function in column vector $u_{n-2}$ by
Theorem 5.7 and 7.3 in \cite{Baruch:2005}. Its Fourier inversion
formula is

\begin{eqnarray*}
M_2(u_{n-2}) &=&\int_{F^{n-2}}
\widehat{M}_2(v_{n-2})\psi(-v_{n-2}u_{n-2})dv_{n-2} \\
&=&\int_{F^{n-3}\times F^{\times}}
\widehat{M}_2(z_1,...,z_{n-2})\psi(-(z_1,...,z_{n-2})u_{n-2})dz_1...dz_{n-2}
\end{eqnarray*}
where we write $v_{n-2}=(z_1,...,z_{n-2})$, with $z_1,...,z_{n-3}\in
F$, $z_{n-2}\in F^{\times}$, and the last equality follows from the
facts that $dz_1...dz_{n-2}$ is the additive Haar measure and
$F^{n-3}\times F^{\times}$ is of full measure in $F^{n-2}$. \\

Put $u_{n-2}=0$, we get

\begin{equation}
M_2(0)=\int_{F^{n-2}\times F^{\times}}
\widehat{M}_2(z_1,...,z_{n-2})dz_1...dz_{n-2} \hspace{2.5cm}
\label{13}
\end{equation}

Similar computations as in section 4 shows that
\[
\widehat{M}_2(x_{n-2,1},...,x_{n-2,n-3},a_{n-2})=|deth_{n-2}|^{-2}
\]
\[
\int_{F^{n-2}} \int_{Y_2} W_m\left(
b\omega_nh_{n-1}^{-1}h_{n-2}^{-1}y_2
\begin{pmatrix} I_{n-2}&u_{n-2}& \\ & 1 & \\ &&1  \end{pmatrix}h_{n-2}
\right)\psi(-y_2)\psi\left( -\begin{pmatrix} I_{n-2}&u_{n-2}& \\ & 1
& \\ &&1  \end{pmatrix} \right) dy_2 du_{n-2}
\]
\[
=|deth_{n-2}|^{-2}\int_{Y_3} W_m(
b\omega_nh_{n-1}^{-1}h_{n-2}^{-1}y_3 h_{n-2} )\psi(-y_3)dy_3
\]

So $(\ref{13})$ becomes
\[
\int_{Y_2} W_m \left(b\omega_n h_{n-1}^{-1}y_2
 \right)\psi(-y_2)dy_2=
 \]
 \begin{equation}
\int_{F^{n-2}\times F^{\times}} |deth_{n-2}|^{-2} \int_{Y_3} W_m(
b\omega_nh_{n-1}^{-1}h_{n-2}^{-1}y_3 h_{n-2}
)\psi(-y_3)dy_3dx_{n-2,1}...da_{n-2} \ \ \ \  \ \ \label{14}
\end{equation} \\

\textbf{Claim 2}: As a function of $h_{n-2}$, $\int_{Y_3} W_m(
b\omega_nh_{n-1}^{-1}h_{n-2}^{-1}y_3 h_{n-2} )\psi(-y_3)dy_3$ has
support in $\bar{B}_{n,m}$. \\

\textit{Proof of the Claim 2}: Take
\[
u=\begin{pmatrix} I_{n-2}&u_{n-2}& \\ &1& \\ &&1   \end{pmatrix}\in
J_m
\]
then argue completely the same as the proof of Claim 1. The details
will be omitted. \\

\hspace{14cm} $\Box$ \\

Compare $(\ref{14})$ with $(\ref{12})$ and note the support of $W_m$
and Claim 2, then it suffices to show that
\[
\int_{Y_3} W_m( b\omega_nh_{n-1}^{-1}h_{n-2}^{-1}y_3 h_{n-2}
)\psi(-y_3)dy_3= \int
\]
\[  j_{\pi}\left(b\omega_nh_{n-1}^{-1}h_{n-2}^{-1}
\begin{pmatrix} a_1 & \\ x_{21} & a_2
\\ & & \ddots \\ x_{n-3,1} & \cdots & x_{n-3,n-4} & a_{n-3} \\ & & & &
I_3
\end{pmatrix}^{-1} \right)
W_m \left(\begin{pmatrix} a_1 & \\ x_{21} & a_2
\\ & & \ddots \\ x_{n-3,1} & \cdots & x_{n-3,n-4} & a_{n-3} \\ & & & &
I_3
\end{pmatrix}h_{n-2}\right)
\]
\[
|a_1|^{-(n-1)}da_1|a_2|^{-(n-2)}dx_{21}da_2\cdots dx_{n-2,1}\cdots
dx_{n-3,n-4}da_{n-3}
\]
for $h_{n-2}\in \bar{B}_{n,m}$, which is equivalent to
\[
\int_{Y_3} W_m( b\omega_nh_{n-1}^{-1}h_{n-2}^{-1}y_3
)\psi(-y_3)dy_3= \int
\]
\[  j_{\pi}\left(b\omega_nh_{n-1}^{-1}h_{n-2}^{-1}
\begin{pmatrix} a_1 & \\ x_{21} & a_2
\\ & & \ddots \\ x_{n-3,1} & \cdots & x_{n-3,n-4} & a_{n-3} \\ & & & &
I_3
\end{pmatrix}^{-1} \right)
W_m \begin{pmatrix} a_1 & \\ x_{21} & a_2
\\ & & \ddots \\ x_{n-3,1} & \cdots & x_{n-3,n-4} & a_{n-3} \\ & & & &
I_3
\end{pmatrix}
\]
\begin{equation}
|a_1|^{-(n-1)}da_1|a_2|^{-(n-2)}dx_{21}da_2\cdots dx_{n-2,1}\cdots
dx_{n-3,n-4}da_{n-3} \ \ \ \  \ \ \label{15}
\end{equation} \\

To prove $(\ref{15})$, inductively, it suffices to show that
\[
\int_{Y_{n-1}} W_m\left( b\omega_nh_{n-1}^{-1}...h_2^{-1} y_{n-1}
\right)\psi(-y_{n-1})dy_{n-1} =
\]
\[
\int_{F^{\times}} j_{\pi}\left( b\omega_n h_{n-1}^{-1}...h_2^{-1}\begin{pmatrix} a_1^{-1} &  \\
& I_{n-1}
\end{pmatrix}\right) W_m\begin{pmatrix} a_1 &  \\  & I_{n-1}
\end{pmatrix}|a_1|^{-(n-1)}da_1
\]
which can be proved completely in the same way as the proof of
\textbf{Theorem} $\ref{thm1}$. Thus the proof of the theorem is
finished. \\

\end{proof}

Now use the same method as in section 6 , together with the above
weak kernel formula for Howe vectors $W_m$ with $m$ large enough, we
can show that for irreducible generic admissible representation
$\pi$ of $G_n$, the Bessel functions $j_{\pi}$ defined via
uniqueness of Whittaker models, coincide with the Bessel function
$j_0$ defined via Bessel distributions, which generalize Theorem
$\ref{thm2}$ to general generic representations. As the proof is
completely the same as Theorem $\ref{thm2}$ we omit the details, and
simply state the result as follows. \\

\begin{thm}
\label{thm4} If $\pi$ is an irreducible admissible smooth generic
representation of $G_n$, then for any $g\in N_n\omega_nA_nN_n$, we
have
\[
j_{\pi}(g)=j_0(g)
\]

\end{thm}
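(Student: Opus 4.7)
The plan is to reproduce the argument of Section~6 verbatim, with the weak kernel formula of Theorem~\ref{thm3} (valid for Howe vectors of a general irreducible generic $\pi$) replacing Proposition~\ref{prop2} (the supercuspidal input used in the proof of Theorem~\ref{thm2}). Since the rest of Section~6 is purely formal manipulation of distributions on the open Bruhat cell, the only genuinely new step is the extension of Proposition~\ref{prop2} to the generic case.

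First, I would deduce the generic analogue of Proposition~\ref{prop2}: for $b=\diag(b_1,\ldots,b_n)\in A_n$ and $m$ sufficiently large that (i) $j_\pi(b\omega_n)$ is bi-invariant under $A_{n,m}$, and (ii) $b_{i-1}/b_i\in\mathfrak{p}^{-3m}$ for $i=3,\ldots,n$, the normalized Howe vector $W_m$ of $\pi$ satisfies
\[
W_m(b\omega_n)=\vol(\bar{B}_{n-1,m})\,j_\pi(b\omega_n).
\]
The proof is exactly the one given for Proposition~\ref{prop2}: apply Theorem~\ref{thm3} to $W_m$, use Theorem~\ref{thm10} to restrict the range of integration to $\bar{B}_{n-1,m}$ (on which $W_m$ equals $1$), and use (ii) together with $x_{i,i-1}\in\mathfrak{p}^{3m}$ to trivialize the character $\psi(b\omega_n a^{-1}x^{-1}a\omega_n b^{-1})$; (i) then trivializes the $a$-translation of $j_\pi$. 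Because Theorem~\ref{thm10} forces compact support mod $N_{n-1}$ on the Kirillov model, the iterated integral of Theorem~\ref{thm3} converges absolutely in this situation, so the rearrangement is justified.

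Next, I would carry out the transform-to-$A_n$ step exactly as in Section~6: pass from the distribution $J=L(\omega_n)B_{l,l'}$ on the open Bruhat cell to the distribution $\sigma_J$ on $A_n$ via the surjection $f\mapsto\beta_f$ of \cite{Shalika:1974}, verify admissibility of $\sigma_J$ at the given $a$ by taking $m$ large enough so that $\psi$ is trivial on $\omega_n a^{\pm 1}\bar{N}_{n,m}a^{\mp 1}\omega_n$ (Corollary~3.5 of \cite{Baruch:2001}), and thereby obtain
\[
j_0(\omega_n a)=B_{l,l'}(L(\omega_n)\Phi_a)\Delta^{-1}(a),
\]
where $\Phi_a$ is the standard test function on $\bar{N}_n A_n N_n$. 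Finally, I would apply Lemma~\ref{le1} with $f=L(\omega_n)\Phi_a$ and $\widehat{W}=\widehat{W}_{m_1}$, a Howe vector for $\widetilde{\pi}$ of level $m_1\ge m$ chosen large enough that $L(\omega_n)\Phi_a$ is left $\bar{B}_{n-1,m_1}$-invariant and $a_i/a_{i-1}\in\mathfrak{p}^{-3m_1}$ for $i=3,\ldots,n$. The right-hand side of Lemma~\ref{le1} collapses to $B_{l,l'}(L(\omega_n)\Phi_a)\vol(\bar{B}_{n-1,m_1})$ by support considerations. The left side, after unwinding the integral against $dg=\Delta(a)\,d\bar{u}_1\,da\,du_2$ and applying the first step to the Howe vector $\widehat{W}_{m_1}$ of $\widetilde{\pi}$ via $j_{\widetilde{\pi}}(g^{-1})=j_\pi(g)$, yields $\Delta(a)\,j_\pi(\omega_n a)\,\vol(\bar{B}_{n-1,m_1})$. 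Equating the two expressions and cancelling $\Delta(a)\vol(\bar{B}_{n-1,m_1})$ gives $j_0(\omega_n a)=j_\pi(\omega_n a)$.

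The only conceptual obstacle is Step~1 for a general generic $\pi$, and this was precisely the motivation for proving Theorem~\ref{thm3}; once the weak kernel formula is in hand for Howe vectors, all the remaining ingredients of Section~6 (admissibility of $\sigma_J$, the pairing in Lemma~\ref{le1}, the functional equation $j_{\widetilde{\pi}}(g^{-1})=j_\pi(g)$) apply without modification.
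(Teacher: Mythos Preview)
Your proposal is correct and follows exactly the approach the paper takes: the paper itself simply says ``use the same method as in section~6, together with the above weak kernel formula for Howe vectors $W_m$'' and omits the details, and your write-up spells out precisely those details---extending Proposition~\ref{prop2} via Theorem~\ref{thm3} and then rerunning the Section~6 argument verbatim.
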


\vspace{1.5cm}

\vspace{1cm}

\end{document}